\def\b{\ensuremath\boldsymbol}
\let\Ginclude@graphics\@org@Ginclude@graphics 
\title[Vector Transport Free Riemannian LBFGS]{Vector Transport Free Riemannian LBFGS for Optimization on Symmetric Positive Definite Matrix Manifolds}
  \author{\Name{Reza Godaz}\thanks{The first two authors contributed equally to this work.} \Email{reza.godaz@mail.um.ac.ir}\\
  \addr Department of Computer Engineering, Ferdowsi University of Mashhad, Mashhad, Iran
  \AND
  \Name{Benyamin Ghojogh}\footnotemark[1] \Email{bghojogh@uwaterloo.ca}\\
  \addr Department of Electrical and Computer Engineering, University of Waterloo, ON, Canada
  \AND
  \Name{Reshad Hosseini} \Email{reshad.hosseini@ut.ac.ir}\\
  \addr Department of Electrical and Computer Engineering, University of Tehran, Tehran, Iran
  \AND
  \Name{Reza Monsefi} \Email{monsefi@um.ac.ir}\\
  \addr Department of Computer Engineering, Ferdowsi University of Mashhad, Mashhad, Iran
  \AND
  \Name{Fakhri Karray} \Email{karray@uwaterloo.ca}\\
  \Name{Mark Crowley} \Email{mcrowley@uwaterloo.ca}\\
  \addr Department of Electrical and Computer Engineering, University of Waterloo, ON, Canada
 }
\begin{document}

\maketitle

\begin{abstract}
This work concentrates on optimization on Riemannian manifolds. The Limited-memory Broyden-Fletcher-Goldfarb-Shanno (LBFGS) algorithm is a commonly used quasi-Newton method for numerical optimization in Euclidean spaces. Riemannian LBFGS (RLBFGS) is an extension of this method to Riemannian manifolds. RLBFGS involves computationally expensive vector transports as well as unfolding recursions using adjoint vector transports. In this article, we propose two mappings in the tangent space using the inverse second root and Cholesky decomposition. These mappings make both vector transport and adjoint vector transport identity and therefore isometric. Identity vector transport makes RLBFGS less computationally expensive and its isometry is also very useful in convergence analysis of RLBFGS. Moreover, under the proposed mappings, the Riemannian metric reduces to Euclidean inner product, which is much less computationally expensive. We focus on the Symmetric Positive Definite (SPD) manifolds which are beneficial in various fields such as data science and statistics. This work opens a research opportunity for extension of the proposed mappings to other well-known manifolds. 
\end{abstract}
\begin{keywords}
LBFGS, Riemannian optimization, positive definite manifolds, isometric vector transport, quasi-Newton's method
\end{keywords}

\section{Introduction}

Various numerical optimization methods have appeared, for the Euclidean spaces, which can be categorized into first-order and second-order methods \citep{nocedal2006numerical}. Examples for the former category are steepest descent and gradient descent and for the latter group are Newton's method. Computation of the Hessian matrix is usually expensive in Newton's method encouraging many practical problems to either use quasi-Newton's methods for approximating the Hessian matrix or use non-linear conjugate gradient \citep{hestenes1952methods}. The most well-known algorithm for quasi-Newton optimization is Broyden-Fletcher-Goldfarb-Shanno (BFGS) \citep{fletcher2013practical}. Limited-memory BFGS (LBFGS) is a simplified version of BFGS which utilizes less memory \citep{nocedal1980updating,liu1989limited}. It has recursive unfoldings which approximate the descent directions in optimization. 

Unlike Euclidean spaces in which the optimization direction lie in a linear coordinate system, Riemannian spaces have curvature in coordinates. 
Recently, extension of optimization methods from Euclidean spaces to Riemannian manifolds has been extensively noticed in the literature \citep{absil2009optimization,boumal2020introduction,hu2020brief}.
For example, Euclidean BFGS has been extended to Riemannian manifolds, named Riemannian BFGS (RBFGS), \citep{qi2010riemannian}, its convergence has been proven \citep{ring2012optimization,huang2015broyden}, and its properties have been analyzed in the literature \citep{seibert2013properties}. 
As vector transport is computationally expensive in RBFGS, cautious RBFGS was proposed \citep{huang2016riemannian} which ignores the curvature condition in the Wolfe conditions \citep{wolfe1969convergence} and only checks the Armijo condition \citep{armijo1966minimization}. 
Since the curvature condition guarantees that the approximation of Hessian remains positive definite, it compensates by checking a cautious condition \citep{li2001global} before updating the approximation of Hessian. This cautious RBFGS has been used in the Manopt optimization toolbox \citep{boumal2014manopt}.
Another approach is an extension of the Euclidean LBFGS to Riemannian manifolds, named Riemannian LBFGS (RLBFGS), using both Wolfe conditions \citep{wolfe1969convergence} in linesearch can be found in \citep{sra2015conic,sra2016geometric,hosseini2020alternative}. 
Some other direct extensions of Euclidean BFGS to Riemannian spaces exist (e.g., see \cite[Chapter 7]{ji2007optimization}).

In this paper, we address the computationally expensive parts of RLBFGS algorithm, which are computation of vector transports, their adjoints, and Riemannian metrics. To achieve this, we propose two mappings, in the tangent space, which make RLBFGS free of vector transport. We name the obtained algorithm Vector Transport Free (VTF)-RLBFGS. One mapping uses inverse second root and the other uses Cholesky decomposition which is very efficient \citep{golub2013matrix}. The proposed mappings make both vector transport and adjoint vector transport, which are used in RLBFGS, identity. This reduction of transports to identity makes optimization much less expensive computationally. Moreover, as the vector transports become identity, they are isometric which is a suitable property mostly used in the convergence proofs of RBFGS and RLBFGS algorithms \citep{ring2012optimization,huang2015broyden}. 
Furthermore, under the proposed mappings, the Riemannian metric reduces to Euclidean inner product which is much less computationally expensive.
In this paper, we concentrate on the Symmetric Positive Definite (SPD) manifolds \citep{sra2015conic,sra2016geometric,bhatia2009positive} which are very useful in data science and machine learning, such as in mixture models \citep{hosseini2020alternative,hosseini2015mixest}. This paper opens a new research path for extension of the proposed mappings to other well-known manifolds such as Grassmann and Stiefel \citep{edelman1998geometry}. 

The remainder of this paper is organized as follows. Section \ref{section_background} reviews the notations and technical background on Euclidean LBFGS, Wolfe conditions, Riemannian LBFGS, and SPD manifold. The proposed mappings using inverse second root and Cholesky decomposition are introduced in Sections \ref{section_VTF_inverse_root} and \ref{section_VTF_Cholesky}, respectively. Simulation results are reported in Section \ref{section_simulations}. Finally, Section \ref{section_conclusion} concludes the paper and proposes the possible future directions.



















\section{Background and Notations}\label{section_background}

\subsection{Euclidean BFGS and LBFGS}\label{section_Euclidean_BFGS}

Consider minimization of the cost function $f(\b{\Sigma})$ where the point $\b{\Sigma}$ belongs to some domain.
In Newton's method, the descent direction $\b{p}_k$ at the iteration $k$ is calculated as $\b{B}_k \b{p}_k = - \nabla f(\b{\Sigma}_k) \implies \b{p}_k = - \b{B}_k^{-1} \nabla f(\b{\Sigma}_k)$ \citep{nocedal2006numerical}, where $\b{B}_k$ is the Hessian or approximation of Hessian and $\nabla f(\b{\Sigma}_k)$ is the gradient of function at iteration $k$.
The Euclidean BFGS method is a quasi-Newton's method which approximates the Hessian matrix as $\b{B}_{k+1} := \b{B}_k + (\b{y}_k \b{y}_k^\top)/(\b{y}_k^\top \b{s}_k) - (\b{B}_k \b{s}_k \b{s}_k^\top \b{B}_k^\top)/(\b{s}_k^\top \b{B}_k \b{s}_k)$ \citep{fletcher2013practical,nocedal2006numerical}, where $\b{s}_k := \b{\Sigma}_{k+1} - \b{\Sigma}_k$ and $\b{y}_k := \nabla f(\b{\Sigma}_{k+1}) - \nabla f(\b{\Sigma}_{k})$. The descent direction is $\b{p}_k$ whose expression was provided above. 


The Euclidean LBFGS calculates the descent direction recursively where it uses the approximation of the inverse Hessian as $\b{H}_{k+1} := \b{V}_k^\top \b{H}_k \b{V}_k + \rho_k \b{s}_k \b{s}_k^\top$ \citep{nocedal1980updating,liu1989limited},
where $\b{H}_k$ denotes the approximation of the inverse of the Hessian at iteration $k$, $\rho_k := 1/(\b{y}_k^\top \b{s}_k)$, and $\b{V}_k := \b{I} - \rho_k \b{y}_k \b{s}_k^\top$ in which $\b{I}$ denotes the identity matrix. 
The LBFGS algorithm updates the approximation of the inverse of the Hessian matrix recursively and for that it always stores a memory window of pairs $\{\b{y}_k, \b{s}_k\}$ \citep{liu1989limited}. 

\subsection{Linesearch and Wolfe Conditions}\label{section_Wolfe_conditions}

After finding the descent direction $\b{p}_k$ at each iteration $k$ of optimization, one needs to know what step size $\alpha_k$ should be taken in that direction. Linesearch should be performed to find the largest step, for faster progress, satisfying Wolfe conditions which are $f(\b{\Sigma}_k + \alpha_k) \leq f(\b{\Sigma}_k) + c_1 \alpha_k \b{p}_k^\top \nabla f(\b{\Sigma}_k)$ and $-\b{p}_k^\top \nabla f(\b{\Sigma}_k + \alpha_k \b{p}_k) \leq -c_2 \b{p}_k^\top \nabla f(\b{\Sigma}_k)$ \citep{wolfe1969convergence}, where the parameters $0 < c_1 < c_2 < 1$ are recommended to be $c_1 = 10^{-1}$ and $c_2 = 0.9$ \citep{nocedal2006numerical}.
The former condition is the Armijo condition to check if cost decreases sufficiently \citep{armijo1966minimization} while the latter is the curvature condition making sure that the slope is reduced sufficiently in a way that the approximation of Hessian remains positive definite. Note that there also exists a strong curvature condition, i.e., $|\b{p}_k^\top \nabla f(\b{\Sigma}_k + \alpha_k \b{p}_k)| \leq c_2 |\b{p}_k^\top \nabla f(\b{\Sigma}_k)|$. 

\subsection{Riemannian Notations}

Consider a Riemannian manifold denoted by $\mathcal{M}$. 
At every point $\b{\Sigma} \in \mathcal{M}$, there is a tangent space to the manifold, denoted by $T_{\b{\Sigma}}\mathcal{M}$. A tangent space includes tangent vectors. We denote a tangent vector by $\b{\xi}$. 
For $\b{\xi}, \b{\eta} \in T_{\b{\Sigma}}\mathcal{M}$, a metric on this manifold is the inner product defined on manifold and is denoted by $g_{\b{\Sigma}}(\b{\xi}, \b{\eta})$. 
Note that the gradient of a cost function, whose domain is a manifold, is a tangent vector in the tangent space and is denoted by $\nabla f(\b{\Sigma})$ for point $\b{\Sigma} \in \mathcal{M}$.
Vector transport is an operator which maps a tangent vector $\b{\xi} \in T_{\b{\Sigma}_1}\mathcal{M}$ from its tangent space at point $\b{\Sigma}_1$ to another tangent space at another point $\b{\Sigma}_2$. We denote this vector transport by $\mathcal{T}_{\b{\Sigma}_1, \b{\Sigma}_2}(\b{\xi}): T_{\b{\Sigma}_1}\mathcal{M} \mapsto T_{\b{\Sigma}_2}\mathcal{M}$.
Now, consider a point $\b{\Sigma}$ on a manifold $\mathcal{M}$ and a descent direction $\b{\xi}$ in the tangent space $T_{\b{\Sigma}}\mathcal{M}$. The retraction $\text{Ret}_{\b{\Sigma}}(\b{\xi}): T_{\b{\Sigma}}\mathcal{M} \mapsto \mathcal{M}$ retracts or maps the direction $\b{\xi}$ in the tangent space onto the manifold $\mathcal{M}$. The operator exponential map, denoted by $\text{Exp}_{\b{\Sigma}}(\b{\xi})$, is also capable of this mapping by moving along the geodesic. 
One can use the second-order Taylor expansion of exponential map, which is positive-preserving \citep{jeuris2012survey} for the case of SPD manifold, for approximating the exponential map. 
In this paper, $\textbf{tr}(.)$ denotes the trace of matrix and $\|.\|_F$ denotes the Frobenius norm.

\subsection{Riemannian BFGS and LBFGS}

The Riemannian extension of Euclidean BFGS (RBFGS) \citep{qi2010riemannian,ring2012optimization,huang2015broyden} performs updates of Hessian approximation by the $\b{B}_{k+1}$ (see Section \ref{section_Euclidean_BFGS}) using Riemannian operators. RBFGS methods check both Wolfe conditions (see Section \ref{section_Wolfe_conditions}) which are computationally expensive. Cautious RBFGS \citep{huang2016riemannian} ignores the curvature condition and only checks the Armijo condition for linesearch. However, for ensuring that the Hessian approximation remains positive definite, it checks a cautious condition \citep{li2001global} before updating the Hessian approximation. 


Riemannian LBFGS \citep{sra2015conic,sra2016geometric,hosseini2020alternative} performs the recursions of LBFGS in the Riemannian space for finding the descent direction $\b{\xi}_k \in T_{\b{\Sigma}_k} {\mathcal{M}}$ and checks both Wolfe conditions in linesearch. 
In every iteration $k$ of optimization, recursion starts with the direction $\b{p} = -\nabla f(\b{\Sigma}_k)$. 
Let the recursive function $\text{GetDirection}(\b{p}, k)$ returns the descent direction.
Inside every step of this recursion, we have \cite[Algorithm 3]{hosseini2020alternative}:
\begin{align}
&\b{\tilde{p}} := \b{p} - \rho_k\, \b{g}_{\b{\Sigma}_k}(\b{s}_k, \b{p}) \b{y}_k, \label{eq_recursion_p_tilde} \\
&\b{\widehat{p}} := \mathcal{T}_{\b{\Sigma}_{k-1}, \b{\Sigma}_k}\big(\text{GetDirection}(\mathcal{T}^*_{\b{\Sigma}_{k-1}, \b{\Sigma}_k} (\b{\tilde{p}}), k-1)\big), \label{eq_recursion_adjoint} \\
&\text{return  } \b{\xi}_k := \b{\widehat{p}} - \rho_k\, \b{g}_{\b{\Sigma}_k}(\b{y}_k, \b{\widehat{p}}_k) \b{s}_k + \rho_k\, \b{g}_{\b{\Sigma}_k}(\b{s}_k, \b{s}_k) \b{p}, \label{eq_recursion_xi}
\end{align}
where, $\rho_k := 1/\b{g}_{\b{\Sigma}_k}(\b{y}_k, \b{s}_k)$, and inspired by the introduced $\b{s}_k$ and $\b{y}_k$ for Euclidean spaces, we have:
\begin{align}
& \b{\Sigma}_{k+1} := \text{Exp}_{\b{\Sigma}_k}(\alpha_k \b{\xi}_k) \,\, \text{or} \,\, \text{Ret}_{\b{\Sigma}_k}(\alpha_k \b{\xi}_k), \label{eq_RLBFGS_Sigma} \\
& \b{s}_{k+1} := \mathcal{T}_{\b{\Sigma}_k, \b{\Sigma}_{k+1}}(\alpha_k \b{\xi}_k), \label{eq_RLBFGS_s} \\
& \b{y}_{k+1} := \nabla f(\b{\Sigma}_{k+1}) - \mathcal{T}_{\b{\Sigma}_k, \b{\Sigma}_{k+1}}\big(\nabla f(\b{\Sigma}_{k})\big). \label{eq_RLBFGS_y}
\end{align}
Note that the new point in every iteration is found by retraction, or an exponential map, of the searched point along the descent direction onto manifold. 
According to Eq. (\ref{eq_recursion_adjoint}) in recursion and Eq. (\ref{eq_RLBFGS_s}), RLBFGS involves both adjoint vector transport and vector transport which are computationally expensive. Moreover, Eqs. (\ref{eq_recursion_p_tilde}) and (\ref{eq_recursion_xi}) show that Riemannian metric is utilized many times inside recursions. Our proposed mappings simplify all vector transport, adjoint vector transport, and metric which are used in the RLBFGS algorithm.

\subsection{Symmetric Positive Definite (SPD) Manifold}

Consider the SPD manifold \citep{sra2015conic,sra2016geometric} whose every point is a SPD matrix, i.e., $\b{\Sigma} \in \mathcal{M}$ and $\mathbb{S}_{++}^{n} \ni \b{\Sigma} \succ \b{0}$. It can be shown that the tangent space to the SPD manifold is the space of symmetric matrices, i.e., $T_{\mathcal{M}}(\b{\Sigma}) \subset \mathbb{S}_{++}^n$ \citep{bhatia2009positive}. 
In this paper, we focus on SPD manifolds which are widely used in data science. 
The operators for metric, gradient, exponential map, and vector transport on SPD manifolds are listed in Table \ref{table_operators} \citep{sra2016geometric,hosseini2020alternative}.
In this table, $\nabla_E f(\b{\Sigma})$ denotes the Euclidean gradient and $\b{L}_1$ and $\b{L}_2$ are the lower-triangular matrices in Cholesky decomposition of points $\b{\Sigma}_1$ and $\b{\Sigma}_2$, respectively. 

\begin{table*}[!t]
\caption{Operators on SPD manifold under the proposed mappings.}
\label{table_operators}
\renewcommand{\arraystretch}{1.3}  
\centering
\scalebox{1}{    
\renewcommand{\arraystretch}{1.2}
\begin{tabular}{l || c}
\hline
\hline
Operator & No mapping \\
\hline
Metric, $g_{\b{\Sigma}}(\b{\xi}, \b{\eta})$ & $ \textbf{tr}(\b{\Sigma}^{-1} \b{\xi} \b{\Sigma}^{-1} \b{\eta})$ \\
Gradient, $\nabla f(\b{\Sigma})$ & $\frac{1}{2} \b{\Sigma} \big( \nabla_E f(\b{\Sigma}) + (\nabla_E f(\b{\Sigma}))^\top \big) \b{\Sigma}$ \\
Exponential map, $\text{Exp}_{\b{\Sigma}}(\b{\xi})$ & $\b{\Sigma}\, \text{exp}(\b{\Sigma}^{-1} \b{\xi}) = \b{\Sigma}^{\frac{1}{2}}\, \text{exp}(\b{\Sigma}^{-\frac{1}{2}} \b{\xi} \b{\Sigma}^{-\frac{1}{2}})\, \b{\Sigma}^{\frac{1}{2}}$ \\
Vector transport, $\mathcal{T}_{\b{\Sigma}_1,\b{\Sigma}_2}(\b{\xi})$ & $\b{\Sigma}_2^{\frac{1}{2}} \b{\Sigma}_1^{-\frac{1}{2}} \b{\xi} \b{\Sigma}_1^{-\frac{1}{2}} \b{\Sigma}_2^{\frac{1}{2}}$ or $\b{L}_2 \b{L}_1^{-1} \b{\xi} \b{L}_1^{-\top} \b{L}_2^{\top}$ \\
Approx. Euclidean retraction, $\text{Ret}_{\b{\Sigma}}(\b{\xi})$ & $\b{\Sigma} + \b{\xi} + \frac{1}{2} \b{\xi} \b{\Sigma}^{-1} \b{\xi}$ \\
\hline
\hline
\hline
Operator & Mapping by inverse second root \\
\hline
Mapping & $\b{\xi}' := \b{\Sigma}^{-\frac{1}{2}}\, \b{\xi}\, \b{\Sigma}^{-\frac{1}{2}}$ \\
Metric, $g'_{\b{\Sigma}}(\b{\xi}', \b{\eta}')$ & $\textbf{tr}(\b{\xi}' \b{\eta}')$ \\
Gradient, $\nabla' f(\b{\Sigma})$ & $\frac{1}{2} \b{\Sigma}^{\frac{1}{2}} \big( \nabla_E f(\b{\Sigma}) + (\nabla_E f(\b{\Sigma}))^\top \big) \b{\Sigma}^{\frac{1}{2}}$ \\
Exponential map, $\text{Exp}_{\b{\Sigma}}(\b{\xi}')$ & $\b{\Sigma}^{\frac{1}{2}}\, \text{exp}(\b{\xi}')\, \b{\Sigma}^{\frac{1}{2}}$ \\
Vector transport, $\mathcal{T}'_{\b{\Sigma}_1,\b{\Sigma}_2}(\b{\xi}')$ & $\b{\xi}'$ \\
Approx. Euclidean retraction, $\text{Ret}_{\b{\Sigma}}(\b{\xi}')$ & $\b{\Sigma} + \b{\Sigma}^{\frac{1}{2}} \b{\xi}' \b{\Sigma}^{\frac{1}{2}} + \frac{1}{2} \b{\Sigma}^{\frac{1}{2}} \b{\xi}'^{\,2} \b{\Sigma}^{\frac{1}{2}}$ \\
\hline
\hline
\hline
Operator & Mapping by Cholesky decomposition \\
\hline
Mapping & $\b{\xi}' := \b{L}^{-1}\, \b{\xi}\, \b{L}^{-\top}$ \\
Metric, $g'_{\b{\Sigma}}(\b{\xi}', \b{\eta}')$ & $\textbf{tr}(\b{\xi}' \b{\eta}')$ \\
Gradient, $\nabla' f(\b{\Sigma})$ & $\frac{1}{2} \b{L}^{\top} \big( \nabla_E f(\b{\Sigma}) + (\nabla_E f(\b{\Sigma}))^\top \big) \b{L}$ \\
Exponential map, $\text{Exp}_{\b{\Sigma}}(\b{\xi}')$ & $\b{\Sigma}\, \text{exp}(\b{L}^{-\top} \b{\xi}' \b{L}^\top)$ \\
Vector transport, $\mathcal{T}'_{\b{\Sigma}_1,\b{\Sigma}_2}(\b{\xi}')$ & $\b{\xi}'$ \\
Approx. Euclidean retraction, $\text{Ret}_{\b{\Sigma}}(\b{\xi}')$ & $\b{\Sigma} + \b{L}\, \b{\xi}' \b{L}^{\top} + \frac{1}{2} \b{L}\, \b{\xi}'^{\,2} \b{L}^{\top}$ \\
\hline
\hline
\end{tabular}%
}
\end{table*}

\section{Vector Transport Free Riemannian LBFGS Using Mapping by Inverse Second Root}\label{section_VTF_inverse_root}

We propose two mappings on tangent vectors in the tangent space where the first mapping is by inverse second root. Our first proposed mapping in the tangent space of every point $\b{\Sigma} \in \mathcal{M}$ is:
\begin{align}\label{eq_mapping}
\b{\xi}' := \b{\Sigma}^{-\frac{1}{2}}\, \b{\xi}\, \b{\Sigma}^{-\frac{1}{2}} \implies \b{\xi} = \b{\Sigma}^{\frac{1}{2}}\, \b{\xi}'\, \b{\Sigma}^{\frac{1}{2}},
\end{align}
where the mapped tangent vector still remains in the tangent space, i.e. $\b{\xi}, \b{\xi}' \in T_{\b{\Sigma}} {\mathcal{M}} \subset \mathbb{S}_{++}^n$. It is important the proposed mapping is bijective and keeps the tangent vector in the tangent space while it simplifies vector transport, adjoint vector transport, and metric.

Under the proposed mapping (\ref{eq_mapping}), the Riemannian operators on a SPD manifold are modified and mostly simplified. These operators are listed in Table \ref{table_operators}. In the following, we provide proofs for these modifications.

\begin{proposition}\label{proposition_mapping_inverseSecondRoot}
After mapping (\ref{eq_mapping}), we have
\begin{itemize}
\item Metric: the metric on SPD manifold is reduced to the Euclidean inner product, i.e., $g_{\b{\Sigma}}(\b{\xi}', \b{\eta}') = \textbf{tr}(\b{\xi}' \b{\eta}')$.
\item Gradient: the gradient on a SPD manifold is changed to  
$\nabla' f(\b{\Sigma}) = \frac{1}{2} \b{\Sigma}^{\frac{1}{2}} \big( \nabla_E f(\b{\Sigma}) + (\nabla_E f(\b{\Sigma}))^\top \big) \b{\Sigma}^{\frac{1}{2}}$,
where $\nabla_E f(\b{\Sigma})$ denotes the Euclidean gradient.
\item Vector transport: the vector transport is changed to identity, i.e., $\mathcal{T}'_{\b{\Sigma}_1,\b{\Sigma}_2}(\b{\xi}') = \b{\xi}',
$ hence, optimization becomes vector transport free.
\item Exponential map: the exponential map on a SPD manifold becomes $\text{Exp}_{\b{\Sigma}}(\b{\xi}') = \b{\Sigma}^{\frac{1}{2}}\, \text{exp}(\b{\xi}')\, \b{\Sigma}^{\frac{1}{2}}$. 
\item Adjoint vector transport: the adjoint of vector transport on a SPD manifold remains the same. In other words, if before mapping we have the definition of adjoint vector transport as $g_{\b{\Sigma}_1}(\b{\xi}, \mathcal{T}^*_{\b{\Sigma}_1, \b{\Sigma}_2} \b{\eta}) = g_{\b{\Sigma}_2}(\mathcal{T}_{\b{\Sigma}_1, \b{\Sigma}_2} \b{\xi}, \b{\eta}), \forall \b{\xi} \in T_{\b{\Sigma}_1} {\mathcal{M}}, \forall \b{\eta} \in T_{\b{\Sigma}_2} {\mathcal{M}}$ \citep{ring2012optimization},
we will have $g_{\b{\Sigma}_1}(\b{\xi}', \mathcal{T}^{'^*}_{\b{\Sigma}_1, \b{\Sigma}_2} \b{\eta}') = g_{\b{\Sigma}_2}(\mathcal{T}'_{\b{\Sigma}_1, \b{\Sigma}_2} \b{\xi}', \b{\eta}'), \forall \b{\xi}' \in T_{\b{\Sigma}_1} {\mathcal{M}}, \forall \b{\eta}' \in T_{\b{\Sigma}_2} {\mathcal{M}}$.
\item
Retraction: the approximation of Euclidean retraction, using second-order Taylor expansion, on a SPD manifold becomes $\text{Ret}_{\b{\Sigma}}(\b{\xi}') = \b{\Sigma} + \b{\Sigma}^{\frac{1}{2}} \b{\xi}' \b{\Sigma}^{\frac{1}{2}} + \frac{1}{2} \b{\Sigma}^{\frac{1}{2}} \b{\xi}'^{\,2} \b{\Sigma}^{\frac{1}{2}}
$.
\end{itemize}
\end{proposition}
\begin{proof}

\noindent
\textbf{$\bullet$ Metric: }
$g_{\b{\Sigma}}(\b{\xi}', \b{\eta}') \overset{(a)}{=} \textbf{tr}(\b{\Sigma}^{-1} \b{\xi} \b{\Sigma}^{-1} \b{\eta}) = \textbf{tr}(\b{\Sigma}^{-\frac{1}{2}} \b{\Sigma}^{-\frac{1}{2}} \b{\xi} \b{\Sigma}^{-\frac{1}{2}} \b{\Sigma}^{-\frac{1}{2}} \b{\eta}) \overset{(b)}{=} \textbf{tr}(\underbrace{\b{\Sigma}^{-\frac{1}{2}} \b{\xi} \b{\Sigma}^{-\frac{1}{2}}}_{=\, \b{\xi}'} \\ \underbrace{\b{\Sigma}^{-\frac{1}{2}} \b{\eta} \b{\Sigma}^{-\frac{1}{2}}}_{=\, \b{\eta}'}) \overset{(\ref{eq_mapping})}{=} \textbf{tr}(\b{\xi}' \b{\eta}')$
where $(a)$ is because of definition of metric on SPD manifolds (see Table \ref{table_operators}) and $(b)$ is thanks to the cyclic property of trace. 

\noindent
\textbf{$\bullet$ Gradient: } 
$\nabla' f(\b{\Sigma}) \overset{(\ref{eq_mapping})}{=} \b{\Sigma}^{-\frac{1}{2}} \nabla f(\b{\Sigma}) \b{\Sigma}^{-\frac{1}{2}}  \overset{(a)}{=} \frac{1}{2} \b{\Sigma}^{-\frac{1}{2}} \b{\Sigma} \big( \nabla_E f(\b{\Sigma}) + (\nabla_E f(\b{\Sigma}))^\top \big) \b{\Sigma} \b{\Sigma}^{-\frac{1}{2}} = \frac{1}{2} \b{\Sigma}^{\frac{1}{2}} \big( \nabla_E f(\b{\Sigma}) + (\nabla_E f(\b{\Sigma}))^\top \big) \b{\Sigma}^{\frac{1}{2}}$, where $(a)$ is due to definition of gradient on a SPD manifold (see Table \ref{table_operators}). 

\noindent
\textbf{$\bullet$ Vector transport: } 
$\mathcal{T}'_{\b{\Sigma}_1,\b{\Sigma}_2}(\b{\xi}') \overset{(\ref{eq_mapping})}{=} \b{\Sigma}_2^{-\frac{1}{2}} \mathcal{T}_{\b{\Sigma}_1,\b{\Sigma}_2}(\b{\xi}) \b{\Sigma}_2^{-\frac{1}{2}} \overset{(a)}{=} \underbrace{\b{\Sigma}_2^{-\frac{1}{2}} \big( \b{\Sigma}_2^{\frac{1}{2}}}_{=\,\b{I}} \b{\Sigma}_1^{-\frac{1}{2}} \b{\xi} \b{\Sigma}_1^{-\frac{1}{2}} \underbrace{\b{\Sigma}_2^{\frac{1}{2}} \big) \b{\Sigma}_2^{-\frac{1}{2}}}_{=\,\b{I}} \\= \b{\Sigma}_1^{-\frac{1}{2}} \b{\xi} \b{\Sigma}_1^{-\frac{1}{2}} \overset{(\ref{eq_mapping})}{=} \b{\xi}'$, where $(a)$ is due to definition of vector transport on a SPD manifold (see Table \ref{table_operators}).

\noindent
\textbf{$\bullet$ Exponential map: }
$\text{Exp}_{\b{\Sigma}}(\b{\xi}') \overset{(a)}{=} \b{\Sigma}\, \text{exp}(\b{\Sigma}^{-1} \b{\xi}) \overset{(b)}{=} \b{\Sigma}^{\frac{1}{2}}\, \text{exp}(\b{\Sigma}^{-\frac{1}{2}} \b{\xi} \b{\Sigma}^{-\frac{1}{2}})\, \b{\Sigma}^{\frac{1}{2}} \overset{(\ref{eq_mapping})}{=} \b{\Sigma}^{\frac{1}{2}}\, \text{exp}(\b{\xi}') \\ \b{\Sigma}^{\frac{1}{2}}$ 
where $(a)$ is shown in {\cite[Eq. 3.3]{sra2015conic}} and $(b)$ is shown in {\cite[Eq. 3.2]{sra2015conic}}. Also see Table \ref{table_operators}.

\noindent
\textbf{$\bullet$ Retraction: }
$\text{Ret}_{\b{\Sigma}}(\b{\xi}') \overset{(a)}{=} \b{\Sigma} + \b{\xi} + \frac{1}{2} \b{\xi} \b{\Sigma}^{-1} \b{\xi} \overset{(\ref{eq_mapping})}{=} \b{\Sigma} + \b{\Sigma}^{\frac{1}{2}}\, \b{\xi}'\, \b{\Sigma}^{\frac{1}{2}} + \frac{1}{2} \b{\Sigma}^{\frac{1}{2}} \b{\xi}' \underbrace{\b{\Sigma}^{\frac{1}{2}} \b{\Sigma}^{-1} \b{\Sigma}^{\frac{1}{2}}}_{=\, \b{I}} \b{\xi}' \b{\Sigma}^{\frac{1}{2}} = \b{\Sigma} + \b{\Sigma}^{\frac{1}{2}} \b{\xi}' \b{\Sigma}^{\frac{1}{2}} + \frac{1}{2} \b{\Sigma}^{\frac{1}{2}} \b{\xi}'^{\,2} \b{\Sigma}^{\frac{1}{2}}$, 
where $(a)$ is because of approximation of Euclidean retraction, using the second-order Taylor expansion, on SPD manifolds (see Table \ref{table_operators}).
\end{proof}

\section{Vector Transport Free Riemannian LBFGS Using Mapping by Cholesky Decomposition}\label{section_VTF_Cholesky}

Our second proposed mapping is by Cholesky decomposition which is very efficient computationally. Consider the Cholesky decomposition of point $\b{\Sigma} \in \mathcal{M}$ \citep{golub2013matrix}:
\begin{align}
& \b{0} \prec \b{\Sigma} = \b{L} \b{L}^\top \implies \b{\Sigma}^{-1} = \b{L}^{-\top} \b{L}^{-1}, \label{eq_Cholesky_decomposition} 
\end{align}
where $\b{L} \in \mathbb{R}^{n \times n}$ is the lower-triangular matrix in Cholesky decomposition. 
It is noteworthy that many of the MATLAB matrix multiplication operators, which the Manopt toolbox \citep{boumal2014manopt} also uses, apply Cholesky decomposition internally due to its efficiency. 

In the tangent space of every point $\b{\Sigma} \in \mathcal{M}$, the proposed mapping is:
\begin{align}
&\b{\xi}' := \b{L}^{-1}\, \b{\xi}\, \b{L}^{-\top} \implies \b{\xi} = \b{L}\, \b{\xi}'\, \b{L}^{\top}, \label{eq_mapping_Cholesky}
\end{align}
where $\b{\xi}, \b{\xi}' \in T_{\b{\Sigma}} {\mathcal{M}} \subset \mathbb{S}_{++}^n$.
Note that, similar to the previous mapping, the tangent matrix is still symmetric under this mapping; hence, it remains in the tangent space of the SPD manifold \citep{bhatia2009positive}.
Similar to the previous mapping, under the second proposed mapping (\ref{eq_mapping}), the Riemannian operators on SPD manifold are simplified. These operators can be found in Table \ref{table_operators}. In the following, we provide proofs for these operators. 

\begin{proposition}\label{equation_adjoint_vector_transport_mapping_Cholesky}
After mapping (\ref{eq_mapping_Cholesky}), we have:
\begin{itemize}
\item Metric: the metric on a SPD manifold is reduced to the Euclidean inner product, i.e., $g_{\b{\Sigma}}(\b{\xi}', \b{\eta}') = \textbf{tr}(\b{\xi}' \b{\eta}')
$.
\item Gradient: the gradient on SPD manifold is changed to $\nabla' f(\b{\Sigma}) = \frac{1}{2} \b{L}^{\top} \big( \nabla_E f(\b{\Sigma}) + (\nabla_E f(\b{\Sigma}))^\top \big) \b{L}
$, where $\nabla_E f(\b{\Sigma})$ denotes the Euclidean gradient.
\item Vector transport: the vector transport is changed to identity, i.e., $\mathcal{T}'_{\b{\Sigma}_1,\b{\Sigma}_2}(\b{\xi}') = \b{\xi}'$,
hence, optimization becomes vector transport free. 
\item Exponential map: the exponential map becomes $\text{Exp}_{\b{\Sigma}}(\b{\xi}') = \b{\Sigma}\, \text{exp}(\b{L}^{-\top} \b{\xi}' \b{L}^\top)$. 
\item Adjoint vector transport: the adjoint vector transport becomes $g_{\b{\Sigma}_1}(\b{\xi}', \mathcal{T}^{'^*}_{\b{\Sigma}_1, \b{\Sigma}_2} \b{\eta}') = g_{\b{\Sigma}_2}(\mathcal{T}'_{\b{\Sigma}_1, \b{\Sigma}_2} \b{\xi}', \b{\eta}'), \forall \b{\xi}' \in T_{\b{\Sigma}_1} {\mathcal{M}}, \forall \b{\eta}' \in T_{\b{\Sigma}_2} {\mathcal{M}}$ as we had in Proposition \ref{proposition_mapping_inverseSecondRoot}. 
\item Retraction: the approximation of Euclidean retraction by second-order Taylor expansion on a SPD manifold becomes $\text{Ret}_{\b{\Sigma}}(\b{\xi}') = \b{\Sigma} + \b{L}\, \b{\xi}' \b{L}^{\top} + \frac{1}{2} \b{L}\, \b{\xi}'^{\,2} \b{L}^{\top}$.
\end{itemize}
\end{proposition}
\begin{proof}

\noindent
\textbf{$\bullet$ Metric: }
$g_{\b{\Sigma}}(\b{\xi}', \b{\eta}') \overset{(a)}{=} \textbf{tr}(\b{\Sigma}^{-1} \b{\xi} \b{\Sigma}^{-1} \b{\eta}) = \textbf{tr}(\b{L}^{-\top} \b{L}^{-1} \b{\xi} \b{L}^{-\top} \b{L}^{-1} \b{\eta}) \overset{(b)}{=} \textbf{tr}(\underbrace{\b{L}^{-1} \b{\xi} \b{L}^{-\top}}_{=\, \b{\xi}'} \\
\underbrace{\b{L}^{-1} \b{\eta} \b{L}^{-\top}}_{=\, \b{\eta}'}) \overset{(\ref{eq_mapping_Cholesky})}{=} \textbf{tr}(\b{\xi}' \b{\eta}')$,
where $(a)$ is because of definition of metric on SPD manifolds (see Table \ref{table_operators}) and $(b)$ is thanks to the cyclic property of trace.

\noindent
\textbf{$\bullet$ Gradient: }
$\nabla' f(\b{\Sigma}) \overset{(\ref{eq_mapping_Cholesky})}{=} \b{L}^{-1} \nabla f(\b{\Sigma}) \b{L}^{-\top} \overset{(a)}{=} \frac{1}{2} \b{L}^{-1} \b{\Sigma} \big( \nabla_E f(\b{\Sigma}) + (\nabla_E f(\b{\Sigma}))^\top \big) \b{\Sigma} \b{L}^{-\top} \\
\overset{(\ref{eq_Cholesky_decomposition})}{=} \frac{1}{2} \b{L}^{-1} \b{L} \b{L}^\top \big( \nabla_E f(\b{\Sigma}) + (\nabla_E f(\b{\Sigma}))^\top \big) \b{L} \b{L}^\top \b{L}^{-\top} = \frac{1}{2} \b{L}^\top \big( \nabla_E f(\b{\Sigma}) + (\nabla_E f(\b{\Sigma}))^\top \big) \b{L}$, 
where $(a)$ is due to definition of gradient on a SPD manifold (see Table \ref{table_operators}). 

\noindent
\textbf{$\bullet$ Vector transport: }
$\mathcal{T}'_{\b{\Sigma}_1,\b{\Sigma}_2}(\b{\xi}') \overset{(\ref{eq_mapping_Cholesky})}{=} \b{L}_2^{-1} \mathcal{T}_{\b{\Sigma}_1,\b{\Sigma}_2}(\b{\xi}) \b{L}_2^{-\top} \overset{(a)}{=} {\b{L}_2^{-1} \big( \b{L}_2}\b{L}_1^{-1} \b{\xi} \b{L}_1^{-\top} {\b{L}_2^{\top} \big) \b{L}_2^{-\top}} \\
= \b{L}_1^{-1} \b{\xi} \b{L}_1^{-\top} \overset{(\ref{eq_mapping_Cholesky})}{=} \b{\xi}'$,
where $(a)$ is due to definition of vector transport on SPD manifold (see Table \ref{table_operators}).


\noindent
\textbf{$\bullet$ Exponential map: }
The exponential map is changed to  $\text{Exp}_{\b{\Sigma}}(\b{\xi}') \overset{(a)}{=} \b{\Sigma}\, \text{exp}(\b{\Sigma}^{-1} \b{\xi}) \overset{(b)}{=} \b{\Sigma}\, \text{exp}(\b{L}^{-\top} \b{L}^{-1} \b{L}\, \b{\xi}' \b{L}^\top)
= \b{\Sigma}\, \text{exp}(\b{L}^{-\top} \b{\xi}' \b{L}^\top)$,
where $(a)$ is shown in {\cite[Eq. 3.3]{sra2015conic}} and $(b)$ is because of Eqs. (\ref{eq_Cholesky_decomposition}) and (\ref{eq_mapping_Cholesky}).

\noindent
\textbf{$\bullet$ Retraction: }
$\text{Ret}_{\b{\Sigma}}(\b{\xi}') \overset{(a)}{=} \b{\Sigma} + \b{\xi} + \frac{1}{2} \b{\xi} \b{\Sigma}^{-1} \b{\xi} \overset{(b)}{=} \b{\Sigma} + \b{L}\, \b{\xi}' \b{L}^{\top} + \frac{1}{2} \b{L}\, \b{\xi}' \b{L}^{\top} \b{L}^{-\top} \b{L}^{-1} \b{L} \b{\xi}' \b{L}^{\top} = \b{\Sigma} + \b{L}\, \b{\xi}' \b{L}^{\top} + \frac{1}{2} \b{L}\, \b{\xi}'^{\,2} \b{L}^{\top}$, where $(a)$ is because of approximation of Euclidean retraction, using second-order Taylor expansion, on SPD manifolds (see Table \ref{table_operators}), and $(b)$ is because of Eqs. (\ref{eq_Cholesky_decomposition}) and (\ref{eq_mapping_Cholesky}). 
\end{proof}

Noticing Eq. (\ref{eq_mapping_Cholesky}), the approximation of retraction under mapping by Cholesky decomposition (see Proposition \ref{equation_adjoint_vector_transport_mapping_Cholesky}), can be restated as $\text{Ret}_{\b{\Sigma}}(\b{\xi}') = 0.5\, \b{\Sigma} + 0.5\, \b{L} (\b{I} + \b{\xi}')^2 \b{L}^\top$. Defining $\b{\Psi} := \b{L} (\b{I} + \b{\xi}')$ restates this retraction as $\text{Ret}_{\b{\Sigma}}(\b{\xi}') = 0.5\, \b{\Sigma} + 0.5\, \b{\Psi} \b{\Psi}^\top$ because $\b{\xi}'$ is symmetric. The term $\b{\Psi} \b{\Psi}^\top$ is very efficient and fast to compute because it is symmetric.

\section{Analytical Discussion and Complexity Analysis}

\begin{corollary}\label{corollary_transforms_isometric}
Propositions \ref{proposition_mapping_inverseSecondRoot} and \ref{equation_adjoint_vector_transport_mapping_Cholesky} show that under mapping (\ref{eq_mapping}) or (\ref{eq_mapping_Cholesky}), both vector transport and adjoint vector transport are identity. As these transforms become identity, they also become isometric because inner products of vectors do not change under these transforms. As they are identity, these transforms also preserve the length of vectors under the proposed mappings. 
\end{corollary}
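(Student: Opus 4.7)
The plan is to assemble the corollary from previously established statements without any new computation. First, I would invoke Propositions \ref{proposition_vector_transport_change} and \ref{proposition_vector_transport_change_Cholesky} to conclude that under either proposed mapping the vector transport $\mathcal{T}'_{\b{\Sigma}_1,\b{\Sigma}_2}$ acts as the identity on $T_{\b{\Sigma}_1}\mathcal{M}$, and then appeal to Corollary \ref{corollary_adjoint_mapping_identity} (together with the analogous remark made just after Proposition \ref{proposition_vector_transport_change_Cholesky} for the Cholesky case) to get that the adjoint transport is also the identity. This handles the first sentence of the corollary entirely by citation.

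Next I would verify isometry. Recall that a linear transport $T: T_{\b{\Sigma}_1}\mathcal{M} \to T_{\b{\Sigma}_2}\mathcal{M}$ is isometric when $g'_{\b{\Sigma}_2}(T\b{\xi}', T\b{\eta}') = g'_{\b{\Sigma}_1}(\b{\xi}', \b{\eta}')$ for all $\b{\xi}', \b{\eta}' \in T_{\b{\Sigma}_1}\mathcal{M}$. The key ingredient, from Propositions \ref{proposition_metric_change} and \ref{proposition_metric_change_Cholesky}, is that under either mapping the metric at every base point reduces to the Euclidean inner product $\textbf{tr}(\b{\xi}'\b{\eta}')$, which is independent of the base point $\b{\Sigma}$. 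Combining this base-point independence with $\mathcal{T}'_{\b{\Sigma}_1,\b{\Sigma}_2}(\b{\xi}') = \b{\xi}'$ immediately yields $g'_{\b{\Sigma}_2}(\mathcal{T}'\b{\xi}', \mathcal{T}'\b{\eta}') = \textbf{tr}(\b{\xi}'\b{\eta}') = g'_{\b{\Sigma}_1}(\b{\xi}', \b{\eta}')$, and the identical calculation applies to the adjoint transport. Length preservation then follows by specializing to $\b{\eta}' = \b{\xi}'$, since the induced norm is $\|\b{\xi}'\|_{\b{\Sigma}}^2 = g'_{\b{\Sigma}}(\b{\xi}', \b{\xi}')$.

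There is no real difficulty here beyond bookkeeping, but the one subtle point I would flag explicitly is why being the identity suffices for isometry in this setting: in general, if two tangent spaces at different base points carry different inner products, the identity map between them need not be an isometry. What makes the conclusion work is precisely the base-point independence of the metric under the new coordinates, as shown in Propositions \ref{proposition_metric_change} and \ref{proposition_metric_change_Cholesky}. The proof is therefore less a calculation than a matter of naming this observation and chaining together the previously proved results.
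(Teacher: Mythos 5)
Your proposal is correct and follows essentially the same route as the paper, which likewise treats the corollary as a direct consequence of the identity form of the transport and its adjoint. The one thing you add beyond the paper's brief justification is the explicit observation that identity maps between tangent spaces at different base points are isometric here only because Propositions \ref{proposition_metric_change} and \ref{proposition_metric_change_Cholesky} make the metric base-point independent; this is exactly the reasoning the paper leaves implicit, and making it explicit is a genuine (if small) improvement in rigor rather than a departure in approach.
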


Propositions \ref{proposition_mapping_inverseSecondRoot} and \ref{equation_adjoint_vector_transport_mapping_Cholesky} and Corollary \ref{corollary_transforms_isometric} show the two proposed mappings simplify vector transport and adjoint vector transport to isometric identity and reduce the Riemannian metric to Euclidean inner product. These reductions and simplifications reduce computations significantly during optimization on the manifold. 
The VTF-RLBFGS algorithm using either of the proposed mappings is shown in Algorithm \ref{algorithm_VTF_RLBFGS}.
As the algorithm shows, at every new point $\b{\Sigma}_k$, the entire parameters are computed in the paradigm of mapping because the manifold operators, calculated as in Table \ref{table_operators}, are in that paradigm. This simplifies operators such as metric and removes vector transports from RLBFGS. 
In case the Riemannian gradient is given directly by the user to RLBFGS, the Riemannian gradient, which is in the tangent space, should be mapped explicitly by Eqs. (\ref{eq_mapping}) and (\ref{eq_mapping_Cholesky}) at every iteration. However, if the Riemannian gradient is calculated from the Euclidean gradient, it should not be mapped explicitly, since it is already in the paradigm of mapping implicitly because of the used operators of Table \ref{table_operators} in that paradigm.

\SetAlCapSkip{0.5em}
\IncMargin{0.8em}
\begin{algorithm2e}[!t]
\DontPrintSemicolon
    \textbf{Input}: Initial point $\b{\Sigma}_0$\;
    $\b{H}_0 := \frac{1}{\sqrt{{g}'_{\b{\Sigma}_0}(\nabla' f(\b{\Sigma}_0), \nabla' f(\b{\Sigma}_0))}} \b{I}$\;
    \For{$k = 0,1,\dots$}{
        Compute $\nabla' f(\b{\Sigma}_k)$ from Euclidean gradient by one of the mappings in Table \ref{table_operators}\;
        $\b{\xi}'_k := \text{GetDirection}(-\nabla' f(\b{\Sigma}_k), k)$\;
        $\alpha_k := $ Line search with Wolfe conditions\;
        $\b{\Sigma}_{k+1} := \text{Exp}_{\b{\Sigma}_k}(\alpha_k \b{\xi}'_k) \,\, \text{or} \,\, \text{Ret}_{\b{\Sigma}_k}(\alpha_k \b{\xi}'_k)$\;
        $\b{s}'_{k+1} := \alpha_k \b{\xi}'_k$\;
        $\b{y}'_{k+1} := \nabla' f(\b{\Sigma}_{k+1}) - \nabla' f(\b{\Sigma}_{k})$\;
        $\b{H}_{k+1} := \frac{g'_{\b{\Sigma}_{k+1}}(\b{s}'_{k+1}, \b{y}'_{k+1})}{g'_{\b{\Sigma}_{k+1}}(\b{y}'_{k+1}, \b{y}'_{k+1})}$\;
        Store $\b{y}'_{k+1}$, $\b{s}'_{k+1}$, $g'_{\b{\Sigma}_{k+1}}(\b{s}'_{k+1}, \b{y}'_{k+1})$, $g'_{\b{\Sigma}_{k+1}}(\b{s}'_{k+1}, \b{s}'_{k+1})$, and $\b{H}_{k+1}$\;
    }
    \textbf{return} $\b{\Sigma}_{k+1}$\;
    \;
    \textbf{Function} $\text{GetDirection}(\b{p}', k)$\;
    \uIf{$k > 0$}{
        $\rho_k := \frac{1}{\b{g}'_{\b{\Sigma}_k}(\b{y}'_k, \b{s}'_k)}$\;
        $\b{\tilde{p}}' := \b{p}' - \rho_k\, \b{g}'_{\b{\Sigma}_k}(\b{s}'_k, \b{p}')  \b{y}'_k$\;
        $\b{\widehat{p}}' := \text{GetDirection}(\b{\tilde{p}}', k-1)$\;
        \textbf{return} $\b{\widehat{p}}' - \rho_k\, \b{g}'_{\b{\Sigma}_k}(\b{y}'_k, \b{\widehat{p}}'_k) \b{s}'_k + \rho_k\, \b{g}'_{\b{\Sigma}_k}(\b{s}'_k, \b{s}'_k) \b{p}'$\;
    }
    \Else{
        \textbf{return} $\b{H}_0\, \b{p}'$\;
    }
\caption{The VTF-RLBFGS algorithm}\label{algorithm_VTF_RLBFGS}
\end{algorithm2e}
\DecMargin{0.8em}

\begin{lemma}
Vector transport is valid under both proposed mappings (\ref{eq_mapping}) and (\ref{eq_mapping_Cholesky}) because they preserve the properties of vector transport. 
\end{lemma}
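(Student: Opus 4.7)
The plan is to show that the new vector transport $\mathcal{T}'$, which by Propositions \ref{proposition_vector_transport_change} and \ref{proposition_vector_transport_change_Cholesky} is the identity map on the reparametrized tangent space, satisfies the three defining axioms of a vector transport as stated in \cite{absil2009optimization}: (i) existence of an associated retraction, (ii) consistency ($\mathcal{T}'_{\b{\Sigma},\b{\Sigma}}(\b{\xi}') = \b{\xi}'$), and (iii) linearity in the transported vector. I would verify each property in turn for both mappings (\ref{eq_mapping}) and (\ref{eq_mapping_Cholesky}).

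First I would establish that the mappings are smooth bijections of the tangent space to itself. Since $\b{\Sigma} \succ \b{0}$, the matrices $\b{\Sigma}^{1/2}$ and $\b{L}$ are invertible, and the congruence transformations $\b{\xi} \mapsto \b{\Sigma}^{-1/2} \b{\xi} \b{\Sigma}^{-1/2}$ and $\b{\xi} \mapsto \b{L}^{-1} \b{\xi} \b{L}^{-\top}$ preserve symmetry; hence $\b{\xi}'$ lies in $T_{\b{\Sigma}}\mathcal{M}$, as already noted after (\ref{eq_mapping_Cholesky}). This ensures the reparametrization does not alter the underlying tangent bundle. Then I would note that the retraction listed in Table \ref{table_operators} under each mapping is precisely the pullback of the known SPD retraction under a smooth diffeomorphism, so it remains a valid retraction; this supplies the associated retraction required by axiom (i).

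Next I would check consistency: when $\b{\Sigma}_1 = \b{\Sigma}_2$, the original SPD vector transport trivially becomes the identity (since $\b{\Sigma}_2^{1/2} \b{\Sigma}_1^{-1/2} = \b{I}$ and $\b{L}_2 \b{L}_1^{-1} = \b{I}$), and this is preserved under the mappings because, by Propositions \ref{proposition_vector_transport_change} and \ref{proposition_vector_transport_change_Cholesky}, $\mathcal{T}'_{\b{\Sigma},\b{\Sigma}}(\b{\xi}') = \b{\xi}'$. Linearity is immediate: the identity map satisfies $\mathcal{T}'_{\b{\Sigma}_1,\b{\Sigma}_2}(a \b{\xi}' + b \b{\eta}') = a \b{\xi}' + b \b{\eta}' = a \mathcal{T}'_{\b{\Sigma}_1,\b{\Sigma}_2}(\b{\xi}') + b \mathcal{T}'_{\b{\Sigma}_1,\b{\Sigma}_2}(\b{\eta}')$.

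The only subtle step, and the one I expect to spend most effort on, is verifying that the associated retraction condition of axiom (i) is compatible with the reparametrization — i.e., that transport along a curve generated by the reparametrized retraction agrees with what the identity map does in the new coordinates. This reduces to checking that the commutative diagram between the original SPD vector transport and retraction pulls back correctly under the bijections (\ref{eq_mapping}) and (\ref{eq_mapping_Cholesky}); since both diagram edges are conjugated by the same congruence $\b{\Sigma}^{\pm 1/2}$ or $\b{L}^{\pm 1}$, the diagram still commutes. All three axioms thereby hold, establishing the lemma. Q.E.D.
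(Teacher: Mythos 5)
Your proposal is correct and follows essentially the same route as the paper: both verify the three defining axioms of a vector transport (existence of an associated retraction, consistency $\mathcal{T}'_{\b{\Sigma},\b{\Sigma}}(\b{\xi}')=\b{\xi}'$, and linearity), with the last two being immediate from the transport being the identity. Your additional remarks on the congruence maps preserving symmetry and on pulling back the retraction are just a more detailed treatment of the first axiom, which the paper dispatches by noting that the tangent space is the space of symmetric matrices and the identity transport keeps the vector there.
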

\begin{proof}
A valid vector transport should satisfy three properties \citep{hosseini2020alternative} (also see {\cite[Definition 8.1.1]{absil2009optimization}} and {\cite[Definition 10.62]{boumal2020introduction}}): 

(1) The following property $\exists \b{v} \in T_{\b{\Sigma}_1}\mathcal{M}: \mathcal{T}_{\b{\Sigma}_1, \b{\Sigma}_2}(\b{\xi}) \in  T_{\text{Ret}_{\b{\Sigma}}(\b{v})}\mathcal{M}, \forall \b{\xi} \in T_{\b{\Sigma}_1}\mathcal{M}$
holds because the tangent space $T_{\text{Ret}_{\b{\Sigma}}(\b{v})}\mathcal{M}$ is isomorphic to the space of symmetric matrices and the identity vector transports return the tangent vector itself which is in the space of symmetric matrices. 

(2) The vector transport of a tangent vector from one point to itself should be the same tangent vector. This holds because vector transport is identity under the proposed mappings: $\mathcal{T}_{\b{\Sigma}, \b{\Sigma}} (\b{\xi}) = \b{\xi}, \forall \b{\xi} \in T_{\b{\Sigma}_1}\mathcal{M}$.

(3) The vector transport $\mathcal{T}_{\b{\Sigma}_1, \b{\Sigma}_2} (\b{\xi})$ should be linear which is because it is equal to $\b{\xi}$ under the proposed mappings.

As after applying the mappings, the vector transport holds the three above properties, it is a valid transport. Q.E.D.
\end{proof}

\begin{proposition}
The time complexity of the recursion part in RLBFGS is improved from $\Theta(m n^3)$ to $\Theta(m n^2)$ after mapping (\ref{eq_mapping}) or (\ref{eq_mapping_Cholesky}), where $m$ is the memory limit, i.e., the maximum number of recursions in RLBFGS (proof is available in Supplementary Material).
This time improvement shows off better in problems whose computation of gradients in Eq. (\ref{eq_RLBFGS_y}), or line $\b{y}'_{k+1}$ in Algorithm \ref{algorithm_VTF_RLBFGS}, is not dominant in complexity. 
\end{proposition}

\section{Simulations}\label{section_simulations}

In this section, we evaluate the effectiveness of the proposed mappings, i.e. (\ref{eq_mapping}) and (\ref{eq_mapping_Cholesky}), in the tangent space. Here, we show that these mappings often improve the performance and speed of RLBFGS. 
The code of this article is available in \url{https://github.com/bghojogh/LBFGS-Vector-Transport-Free}.
In our reports, we denote the proposed Vector Transport Free (VTF) RLBFGS with VTF-RLBFGS where ISR and Cholesky (or Chol.) stand for VTF-RLBFGS using mapping by inverse second root and Cholesky decomposition, respectively. 
For RLBFGS, with and without the proposed mappings, we use both Wolfe linesearch conditions. 
The programming language, used for experiments, was MATLAB and the hardware was Intel Core-i7 CPU with the base frequency 2.20 GHz and 12 GB RAM.
For every experiment, we performed optimization for ten runs and the reported results are the average of performances over the runs. We evaluated our mappings with various application problems, explained below.

\subsection{Gaussian Mixture Model}


\noindent
\textbf{$\bullet$ Formulation:}
An optimization problem, which we selected for evaluation, is the Riemannian optimization for Gaussian Mixture Model (GMM) without the use of expectation maximization. We employ RLBFGS with and without the proposed mappings for fitting the GMM problem whose algorithm can be found in \citep{hosseini2020alternative,hosseini2015mixest}. 
This is a suitable problem for evaluation of the proposed mappings because the covariance matrices are SPD \citep{bhatia2009positive}. 
For this, we minimize the negative log-likelihood of GMM where the covariance matrix is constrained to belong to the SPD matrix manifold \citep{hosseini2020alternative}. For $n$-dimensional GMM, the optimization problem is:
\begin{equation}\label{eq_GMM_problem}
\begin{aligned}
& \underset{\{\alpha_j,\, \b{\mu}_j,\, \b{\Sigma}_j\}_{j=1}^K}{\text{minimize}}
& & -\sum_{i=1}^N \log\Big(\sum_{j=1}^K \alpha_j\, \mathcal{N}(\b{x}_i; \b{\mu}_j, \b{\Sigma}_j)\Big), \\
& \text{subject to}
& & \b{\Sigma}_j \in \mathcal{M} = \mathbb{S}_{++}^n, \quad \forall j \in \{1, \dots, K\},
\end{aligned}
\end{equation}
where $N$ denotes the sample size, $K$ denotes the number of components in mixture model, and $\alpha_j$, $\b{\mu}_j$, and $\b{\Sigma}_j$ are the mixing probability, mean, and covariance of the $j$-th component, respectively. We use the same reformulation trick of \citep{hosseini2020alternative} to reformulate the cost function  of (\ref{eq_GMM_problem}).
The mixture parameters were initialized using K-means++ \citep{arthur2007k} following \citep{hosseini2020alternative}.
Three different levels of separation of Gaussian models, namely low, mid, and high, were used. The reader can refer to \citep{hosseini2020alternative} for mathematical details of these separation levels.

\begin{table*}[!t]
\caption{Comparison of average results over ten runs where exponential map is used in algorithms and $K\in \{2,5\}$, $n\in \{2,10\}$, $N=10n^2 \in \{40, 1000\}$. The $\#$iters, conv, iter, diff, and std are short for number of iterations, convergence, iteration, difference, and standard deviation, respectively.}
\label{table_RiemMax_10n2_K2_expm}
\renewcommand{\arraystretch}{1.3}  
\centering
\scalebox{0.7}{    
\renewcommand{\arraystretch}{1.1}
\begin{tabular}{l | l | l | l | l | l | l | l | l}
\hline
\hline
$K$ & $n$ & Separation & Algorithm & \#iters & conv. time & time diff. std & iter. time & last cost \\
\hline
\multirow{18}{*}{2} & \multirow{9}{*}{2} & \multirow{ 3}{*}{Low} 	
& 	  VTF (ISR) &	 53.100$\pm$18.248 &	 68.380$\pm$52.834 & 13.405 &	 1.140$\pm$0.504 & 0.364$\pm$0.444 \\
& & & VTF (Chol.) &	 52.100$\pm$17.866 &	 61.096$\pm$48.433 & 17.443	& 1.030$\pm$0.463 & 0.364$\pm$0.444 \\
& & & RLBFGS 	&	 52.500$\pm$16.595 &	 65.890$\pm$49.208 & -- &	 1.125$\pm$0.458 & 0.364$\pm$0.444 \\
\cline{3-9}    
& & \multirow{ 3}{*}{Mid} 
&     VTF (ISR) &	 56.400$\pm$21.813 &	 76.124$\pm$69.189 &	10.712 & 1.150$\pm$0.574 & 0.657$\pm$0.344 \\
& & & VTF (Chol.) &	 54.400$\pm$19.156 &	 68.456$\pm$64.290 & 48.016 &	 1.099$\pm$0.504 & 0.638$\pm$0.333 \\
& & & RLBFGS 	&	 57.700$\pm$19.844 &	 81.957$\pm$64.463 & -- &	 1.250$\pm$0.550 & 0.657$\pm$0.344 \\
\cline{3-9}   
& & \multirow{ 3}{*}{High} 
& 	  VTF (ISR) &	 25.500$\pm$3.064 &	 9.518$\pm$2.922 & 0.333	 &	 0.366$\pm$0.067  & 0.341$\pm$0.371 \\
& & & VTF (Chol.) &	 26.000$\pm$4.738 &	 10.254$\pm$5.468  & 3.132 &	 0.377$\pm$0.108 & 0.341$\pm$0.371 \\
& & & RLBFGS &	     25.500$\pm$3.064 &	 10.054$\pm$3.141  & -- &	 0.386$\pm$0.073 & 0.341$\pm$0.371 \\
\cline{2-9}
& \multirow{9}{*}{10} & \multirow{ 3}{*}{Low} 	
& 	  	  VTF (ISR) &     77.600$\pm$54.175 &	 235.615$\pm$466.119 & 12.040 &	 1.936$\pm$1.751 & 4.210$\pm$0.889		\\
& & 	& VTF (Chol.) &   84.100$\pm$73.260 &	 313.398$\pm$714.908 & 257.528 &	 2.041$\pm$2.150 & 4.209$\pm$0.889   	\\
& & 	& RLBFGS &   	  77.600$\pm$52.754 &	 242.743$\pm$458.641 & -- &	 2.069$\pm$1.733 & 4.209$\pm$0.889       \\
\cline{3-9}      
& &\multirow{ 3}{*}{Mid}  	
& 		  VTF (ISR) &     44.600$\pm$7.260 &	 43.654$\pm$16.872 & 4.139 &	 0.948$\pm$0.208  & 4.262$\pm$1.098         \\
& & 	& VTF (Chol.) &   45.900$\pm$8.647 &	 45.661$\pm$20.088 & 5.615 &	 0.955$\pm$0.236  & 4.262$\pm$1.098     \\
& & 	& RLBFGS &   	  45.200$\pm$8.080 &	 48.104$\pm$19.981 & -- &	 1.025$\pm$0.243  & 4.262$\pm$1.098         \\
\cline{3-9}                                
& &\multirow{ 3}{*}{High}  	
& 		  VTF (ISR) &     44.400$\pm$9.252 &	 43.684$\pm$22.460 & 11.746 &	 0.936$\pm$0.254 & 3.874$\pm$1.395          \\
& & 	& VTF (Chol.) &   47.100$\pm$8.333 &	 48.278$\pm$21.112 & 8.992 &	 0.987$\pm$0.240 & 3.874$\pm$1.395      \\
& & 	& RLBFGS &   	  43.300$\pm$7.150 &	 43.904$\pm$17.626 & -- &	 0.981$\pm$0.225 & 3.874$\pm$1.395          \\
\hline
\hline
\multirow{18}{*}{5} & \multirow{9}{*}{2} & \multirow{ 3}{*}{Low} 	
& 		  VTF (ISR) 	& 152.400$\pm$62.819 	& 1344.573$\pm$999.949 & 649.595	 &7.560$\pm$3.406 & 0.260$\pm$0.458 \\
& & 	& VTF (Chol.) 	& 174.800$\pm$114.139 &	 2168.886$\pm$3090.820 & 2347.949 &	 8.680$\pm$6.348 & 0.265$\pm$0.466 \\
& & 	& RLBFGS 		& 166.300$\pm$76.884 	& 1767.676$\pm$1406.454 & -- 	 &8.788$\pm$4.427 & 0.267$\pm$0.454 \\
\cline{3-9} 
& & \multirow{ 3}{*}{Mid} 	
& 		  VTF (ISR) 	& 120.700$\pm$69.620 	& 942.713$\pm$1063.075 & 1120.404 &	 5.807$\pm$3.877 & 0.781$\pm$0.207 \\
& & 	& VTF (Chol.) 	& 121.600$\pm$65.030 	& 897.110$\pm$988.826  &	1315.410 &     5.720$\pm$3.445 & 0.781$\pm$0.207 \\
& & 	& RLBFGS 		& 136.300$\pm$91.493 	& 1404.654$\pm$1986.798& -- & 	 7.057$\pm$5.378 & 0.764$\pm$0.225 \\
\cline{3-9} 
& & \multirow{ 3}{*}{High}  	
& 		  VTF (ISR) 	& 46.400$\pm$17.322 	& 94.741$\pm$105.045 	& 22.071 & 1.739$\pm$0.902 & 1.805$\pm$0.385 \\
& & 	& VTF (Chol.) 	& 46.200$\pm$19.803 	& 98.065$\pm$122.950 & 6.436	& 1.729$\pm$1.020 & 1.805$\pm$0.385 \\
& & 	& RLBFGS 		& 46.200$\pm$18.937 	& 104.934$\pm$126.734 & -- &	 1.879$\pm$1.064 & 1.805$\pm$0.385 \\
\cline{2-9}  
& \multirow{9}{*}{10} & \multirow{ 3}{*}{Low} 	
& 		  VTF (ISR) 	& 295.900$\pm$86.577 	& 5524.495$\pm$3173.855 & 2664.627 	 &17.299$\pm$5.221 & 6.175$\pm$0.745 \\
& & 	& VTF (Chol.) 	& 292.300$\pm$83.828 	& 5294.565$\pm$3165.735 & 5106.693 	 &16.737$\pm$5.350 & 6.197$\pm$0.718 \\
& & 	& RLBFGS 		& 318.800$\pm$100.216 &	 7083.082$\pm$4801.718 & -- &	 20.279$\pm$6.859 & 6.173$\pm$0.744 \\
\cline{3-9}                          
& & \multirow{ 3}{*}{Mid} 	
& 		  VTF (ISR) 	& 134.200$\pm$54.956 	& 1139.332$\pm$919.917 & 306.469 &	 7.302$\pm$3.227 & 6.753$\pm$0.708  \\
& & 	& VTF (Chol.) 	& 133.300$\pm$52.415 	& 1100.519$\pm$842.840 & 296.996 &	 7.183$\pm$3.045 & 6.753$\pm$0.708  \\
& & 	& RLBFGS 		& 135.300$\pm$54.965 	& 1268.707$\pm$967.678 & -- &	 8.070$\pm$3.580 & 6.753$\pm$0.708  \\
\cline{3-9}                           
& & \multirow{ 3}{*}{High}  	
& 		  VTF (ISR) 	& 68.600$\pm$12.367 	& 241.487$\pm$87.593 	& 18.122 & 3.398$\pm$0.764 & 6.599$\pm$0.836  \\
& & 	& VTF (Chol.) 	& 74.000$\pm$14.071 	& 279.271$\pm$105.828 & 40.158 &	 3.632$\pm$0.838 & 6.599$\pm$0.836  \\
& & 	& RLBFGS 		& 68.300$\pm$11.982 	& 258.475$\pm$93.959 & -- 	& 3.661$\pm$0.790 & 6.599$\pm$0.836  \\
\hline
\hline
\end{tabular}%

}
\end{table*}


\noindent
\textbf{$\bullet$ Results:}
We compared RLBFGS performances with and without our proposed mappings. 
The average number of iterations, convergence time, time per iteration, and the cost value of last iteration are reported in Table \ref{table_RiemMax_10n2_K2_expm} where exponential map is used for retraction. 
Results for Taylor approximation of exponential map used as retraction is available in Table 1 of supplementary material.
In these experiments, we report performances for $K\in \{2,5\}$,  $n\in \{2,10\}$, $N=10n^2 \in \{40, 1000\}$. 
For the sake of fairness, all the three compared algorithms start with the same initial points in every run.
The reader can see more extensive experiments for more sample size and dimensionality, i.e. $K\in \{2,5\}$,  $n\in \{2,10, 100\}$, $N=100n^2 \in \{400, 10000, 1000000\}$, in the Supplementary Material.
In these tables, we have also provided the standard deviation (std) of time differences between VTF-RLBFGS and RLBFGS, over the ten runs. This value shows how much the average convergence time improvements over RLBFGS are reliable.

\noindent
\textbf{$\bullet$ Discussion by Varying Separability:}
As Table \ref{table_RiemMax_10n2_K2_expm} and the Table 1 of Supplementary Material show, the proposed ISR mapping converges faster than no mapping most often in all separability levels. Both its time per iteration and number of iterations are often less than no mapping.
These tables also show that the proposed Cholesky mapping converges faster than no mapping in most of the cases, although not all cases. 
Overall, we see that the two proposed mappings make RLBFGS faster and more efficient most often. This pacing improvement can be noticed more for low and mid separability levels because they are harder cases to converge. 
In terms of quality of local minimum, the proposed mappings often find the same local minimum as no mapping, but in a faster way. The equality of the found local optima in the three methods is because of using the same RLBFGS algorithm as their base in addition to having the same initial points. 
In some cases, the proposed mappings have even found better local minima. 
Moreover, as expected, the time difference std reduces in higher separability which is a simpler task. 


\noindent
\textbf{$\bullet$ Discussion by Varying Dimensionality:}
We can discuss the results of Table \ref{table_RiemMax_10n2_K2_expm} and the Table 1 of Supplementary Material by varying dimensionality, $n \in \{2, 10\}$. 
Tables 2 and 3 of Supplementary Material also report performance for dimensions $n \in \{2, 10, 100\}$ and larger sample size.
Obviously, by increasing dimensionality, the time of convergence goes up and the faster pacing of the proposed mappings is noticed more, compared to no mapping.

\noindent
\textbf{$\bullet$ Discussion by Varying the Number of Components:}
The results of Table \ref{table_RiemMax_10n2_K2_expm} and the Table 1 of Supplementary Material can also be interpreted based on varying the number of mixture components, $K \in \{2, 5\}$. The more number of components makes the optimization problem harder. Hence, the difference of speeds of the proposed mappings and no mapping can be noticed more for $K=5$; although, for both $K$ values, the proposed mappings are often faster than no mapping. 

\noindent
\textbf{$\bullet$ Discussion by Varying Retraction Type:}
We can also compare the performance of algorithms in terms of type of retraction. Table \ref{table_RiemMax_10n2_K2_expm} and the Table 1 of Supplementary Material report performances where exponential map and Taylor approximation of exponential map are used for retraction, respectively. Comparing these tables shows that using Taylor approximation usually converges with less number of iterations compared to using exponential map. It makes sense because exponential map requires passing on geodesics. This difference of pacing is more obvious for larger number of components, i.e., $K=5$. 
Our two proposed mappings outperform no mapping for both types retraction. This shows that our mappings are effective regardless of the details of operators.

\noindent
\textbf{$\bullet$ Discussion by Varying the Sample Size:}
Table \ref{table_RiemMax_10n2_K2_expm} and the Table 1 of Supplementary Material report for $n\in \{2,10\}$, $N=10n^2 \in \{40, 1000\}$.
More experiments for larger sample size and dimensionality, i.e. $n\in \{2,10, 100\}$, $N=100n^2 \in \{400, 10000, 1000000\}$, can be found in Tables 2 and 3 in the Supplementary Material. 
Comparing those tables with Table \ref{table_RiemMax_10n2_K2_expm} and the Table 1 of Supplementary Material shows that larger sample size and/or dimensionality takes more time to converge as expected. Still, our proposed mappings often converge faster than no mapping. The difference of pacing is mostly less in larger sample size compared to smaller sample size. This is because very large sample size consumes time on computation of cost function and the difference is not given much chance to show off in that case. 


\begin{table*}[!t]
\caption{Comparison of average results over ten runs  with various algorithms where exponential map is used in algorithms and $K\in \{2,5\}$, $n = 2$, $N=1000n^2 = 4000$.}
\label{table_RiemMax_comparison_with_other_algorithms}
\renewcommand{\arraystretch}{1.3}  
\centering
\scalebox{0.7}{    
\renewcommand{\arraystretch}{1.1}
\begin{tabular}{l | l | l | l | l | l | l}
\hline
\hline
& \multicolumn{2}{c}{Low separation} & \multicolumn{2}{|c}{Mid separation} & \multicolumn{2}{|c}{High separation} \\
\hline
Algorithm & \#iters ($K=2$) & \#iters ($K=5$) & \#iters ($K=2$) & \#iters ($K=5$) & \#iters ($K=2$) & \#iters ($K=5$) \\
\hline
VTF (ISR) & 51.500$\pm$8.885 &  123.400$\pm$26.069	& 54.100$\pm$20.464 & 106.500$\pm$39.328 & 24.500$\pm$4.353 &  35.500$\pm$6.996  \\
VTF (Chol.) &	54.200$\pm$11.263 & 123.000$\pm$35.065 &	58.800$\pm$24.943 & 107.500$\pm$44.490 &	25.100$\pm$4.149 & 35.900$\pm$7.622 \\
RLBFGS 	& 52.900$\pm$8.825 & 147.400$\pm$35.926	& 57.900$\pm$29.622	& 110.200$\pm$38.064 &	24.400$\pm$4.006 &  35.600$\pm$7.516 \\
CG 	& 83.100$\pm$22.684 & 142.200$\pm$44.236	& 69.500$\pm$40.175	&  155.100$\pm$48.732 & 28.500$\pm$9.192	&  51.600$\pm$16.392 \\
EM 	& 94.400$\pm$40.114	& 277.900$\pm$142.549 & 118.100$\pm$89.794 &  224.000$\pm$101.576 & 3.200$\pm$0.422 &  3.600$\pm$0.966 \\
\hline    
\hline
\end{tabular}%

}
\end{table*}

\begin{figure}[!t]
\centering
\includegraphics[width=6in]{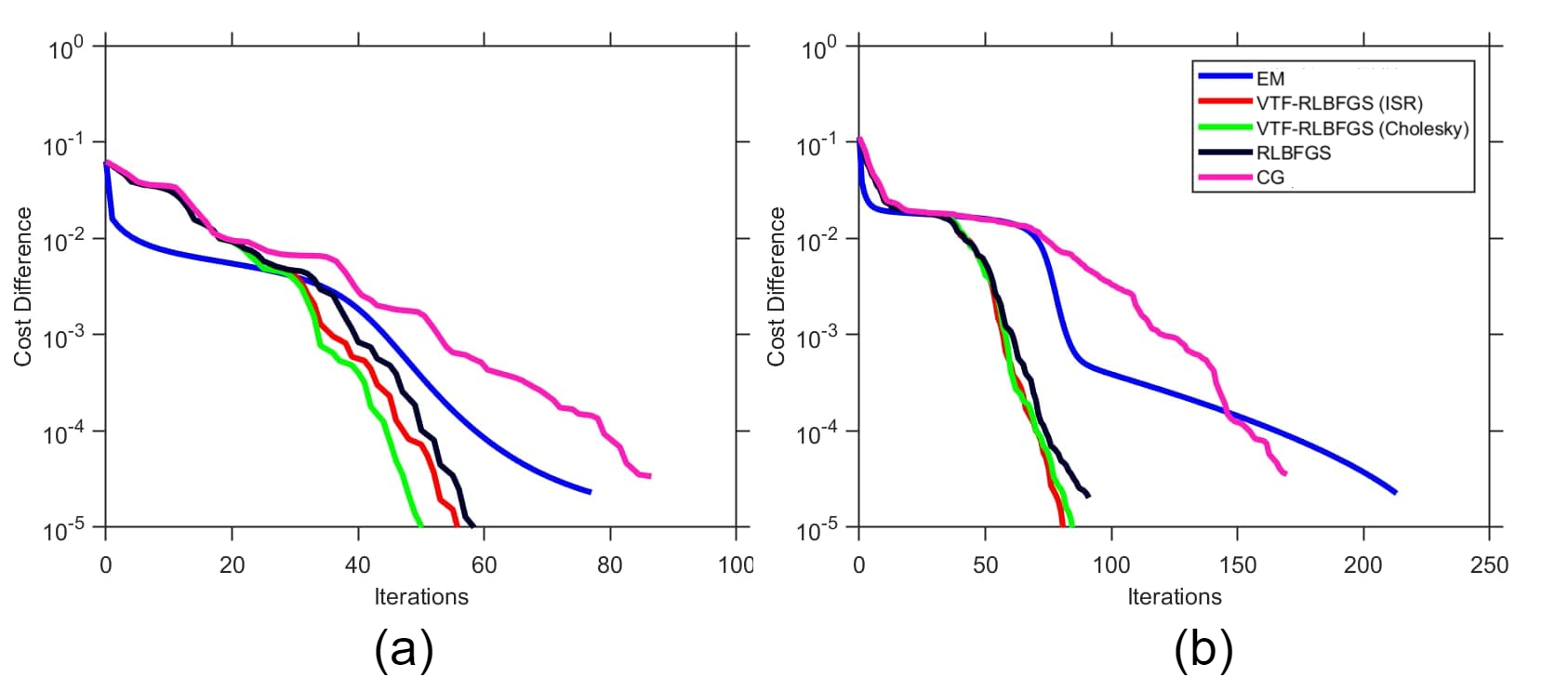}
\caption{The cost difference progress for several runs: (a) $K=2$, $n=2$, $N=4000$, low separation, and (b) $K=5$, $n=2$, $N=4000$, mid separation.}
\label{figure_cost_some_plots}
\end{figure}

\noindent
\textbf{$\bullet$ Comparison with Other Algorithms:}
We compared RLBFGS, with and without the proposed mappings, with some other algorithms, i.e., nonlinear Conjugate Gradient (CG) and Expectation Maximization (EM) for fitting GMM. 
The log-scale cost difference progress of several insightful runs are illustrated in Fig. \ref{figure_cost_some_plots}.
The average number of iterations in the algorithms are compared in Table \ref{table_RiemMax_comparison_with_other_algorithms}.
A complete set of plots for cost difference progress can be seen in Figs. 1, 2, 3, and 4 in the Supplementary Material. Figs. \ref{figure_cost_some_plots}-a and \ref{figure_cost_some_plots}-b show that in some cases, ISR mapping is faster than the Cholesky mapping and in some other cases, we have the other way around. 
As Fig. \ref{figure_cost_some_plots} and Table \ref{table_RiemMax_comparison_with_other_algorithms} show, our proposed mappings are outperforming CG and EM in the number of iterations.

\subsection{Geometric Metric Learning}

\textbf{$\bullet$ Formulation:}
As another application, we experiment with geometric metric learning. We implemented an iterative version of \citep{zadeh2016geometric} whose regularized problem is:
\begin{equation*}\label{eq_metric_learning_problem}
\begin{aligned}
& \underset{\b{W}}{\text{min.}}
& & f :=\!\! \sum_{(\b{x}_i, \b{x}_j) \in \mathcal{S}} (\b{x}_i - \b{x}_j)^\top \b{W} (\b{x}_i - \b{x}_j) + \!\!\sum_{(\b{x}_i, \b{x}_j) \in \mathcal{D}} (\b{x}_i - \b{x}_j)^\top \b{W}^{-1} (\b{x}_i - \b{x}_j) + \frac{1}{2} \|\b{W}\|_F^2, \\
& \text{s.t.}
& & \b{W} \in \mathcal{M} = \mathbb{S}_{++}^n,
\end{aligned}
\end{equation*}
where $\mathcal{S}$ and $\mathcal{D}$ denote the sets of similar and dissimilar points, respectively. We used class labels to randomly sample the points for these sets. This metric learning behaves like triplet loss in which the intra- and inter-class variances are decreased and increased, respectively, for better discrimination of classes \citep{ghojogh2020fisher}. 
The Euclidean gradient of this problem is $\mathbb{R}^{n \times n} \ni \nabla_E f(\b{W}) = \sum_{(\b{x}_i, \b{x}_j) \in \mathcal{S}} (\b{x}_i - \b{x}_j) (\b{x}_i - \b{x}_j)^\top - \sum_{(\b{x}_i, \b{x}_j) \in \mathcal{D}} (\b{W}^{-1} (\b{x}_i - \b{x}_j)) (\b{W}^{-1} (\b{x}_i - \b{x}_j))^\top  + \b{W}$.

\noindent
\textbf{$\bullet$ Results:}
We evaluated our mappings with geometric metric learning optimization on three public datasets, i.e., Fisher Iris, USPS digits, and MNIST digits. The average results over ten runs are reported in Table \ref{table_RiemMax_metric_learning}. In Iris data, both mappings have converged faster than RLBFGS, having a better converged cost function. In USPS and MNIST data, the Cholesky and ISR mappings have outperformed RLBFGS without mapping, respectively. 

\begin{table*}[!h]
\caption{Comparison of geometric metric learning by RLBFGS, with and without the proposed mappings, where exponential map is used in algorithms.}
\label{table_RiemMax_metric_learning}
\renewcommand{\arraystretch}{1.3}  
\centering
\scalebox{0.7}{    
\renewcommand{\arraystretch}{1.1}
\begin{tabular}{l | l | l | l| l | l }
\hline
\hline
Data & Algorithm & \#iters & conv. time & iter. time & last cost  \\
\hline
\multirow{ 3}{*}{Iris} 	
& 	  VTF (ISR) & 23.500$\pm$4.528 &	 2.461$\pm$1.174 &	 0.110$\pm$0.070 	& 1512.602$\pm$347.004 	  \\
& VTF (Chol.) &	25.000$\pm$5.676 & 	 2.474$\pm$1.009 	& 0.096$\pm$0.028 	& 1594.975$\pm$124.087   \\
& RLBFGS 	& 25.000$\pm$4.714 & 	 2.914$\pm$1.239 &	 0.113$\pm$0.037 	& 1620.207$\pm$125.989   \\
\hline    
\multirow{ 3}{*}{USPS} 	
& 	  VTF (ISR) & 14.700$\pm$3.234 &	 1.885$\pm$1.024 	& 0.133$\pm$0.094 &	 14223.215$\pm$0.001   \\
& VTF (Chol.) & 13.100$\pm$1.524 &	 1.246$\pm$0.217 	& 0.095$\pm$0.012 &	 14223.216$\pm$0.001 	 \\
& RLBFGS 	& 13.100$\pm$2.234 &	 1.307$\pm$0.454 	& 0.098$\pm$0.025 &	 14223.215$\pm$0.001 	 \\
\hline
\multirow{ 3}{*}{MNIST} 	
& 	  VTF (ISR) &  11.700$\pm$3.335 & 	 1.288$\pm$0.619 	& 0.108$\pm$0.041 	     & 6254.283$\pm$0.001 	  \\
& VTF (Chol.) &	13.100$\pm$3.479 &	 1.435$\pm$0.793 &	 0.104$\pm$0.029 	&      6254.284$\pm$0.001  \\
& RLBFGS 	& 12.100$\pm$3.381 &	 1.266$\pm$0.754 &	 0.099$\pm$0.024 & 	      6254.284$\pm$0.001   \\
\hline
\hline
\end{tabular}%
}
\end{table*}

\section{Conclusion and Future Direction}\label{section_conclusion}

In this paper, we proposed two mappings in the tangent space of SPD manifolds by inverse second root and Cholesky decomposition. The proposed mappings simplify the vector transports and adjoint vector transports to identity. These transports are widely used in RLBFGS quasi-Newton optimization, to identity. They also reduce the Riemannian metric to the Euclidean inner product which is more efficient computationally. Simulation results verified the effectiveness of the proposed mappings for two optimization tasks on SPD matrix manifolds. In this work, we focused on mappings for SPD manifolds which are widely used in machine learning and data science. A possible future direction is to extend the proposed mappings for other well-known Riemannian manifolds, such as Grassmann and Stiefel \citep{edelman1998geometry}, as well as other Riemannian optimization methods. This paper opens a new research path for such mappings in the tangent space and we conjecture that such mappings can make numerical Riemannian optimization more efficient.


\bibliography{acml21}








\clearpage
\begin{center}
\textbf{\large Vector Transport Free Riemannian LBFGS for Optimization on Symmetric Positive Definite Matrix Manifolds (Supplementary Material)}
\end{center}
\setcounter{equation}{0}
\setcounter{figure}{0}
\setcounter{table}{0}
\setcounter{section}{0}
\setcounter{page}{1}
\makeatletter


\section{Proof for Proposition 5}

\textbf{-- Upper bound analysis: } The most time consuming part of the RLBFGS algorithm is the recursive function GetDirection(.) defined in Section 2.4. In every call of recursion, Eqs. (1)-(3) are performed. The metric used in Eqs. (1) and (3) takes $\mathcal{O}(n^3)$ and $\mathcal{O}(n^2)$ before and after applying the mappings, respectively (cf. Table 1 in main paper). This is because matrix inversion takes $\mathcal{O}(n^3)$ time and the matrices are vectorized within trace operator in the algorithm implementation. The vector transport used in Eq. (2) takes $\mathcal{O}(n^3)$ and $\mathcal{O}(1)$ before and after applying the mappings, respectively (cf. Table 1 in main paper). Recursion in RLBGS is usually done for a constant $m$ number of times (see Ref. Ring and Wirth, 2020). Overall, the complexity of recursion is $\mathcal{O}(m n^3)$ and $\mathcal{O}(m n^2)$, for RLBFGS with and without mappings, respectively.  

\hfill\break
\noindent
\textbf{-- Lower bound analysis: }
Computing matrix inversion in the metric before applying our mappings (cf. Table 1 in main paper) takes $\Omega(n^2 \log n)$ (see article [1] referenced below) while metric takes $\Omega(n^2)$ after the mappings. The vector transport takes $\Omega(n^3)$ and $\Omega(1)$ before and after the mappings, respectively. Overall, the complexity of recursion is $\Omega(m n^3)$ and $\Omega(m n^2)$, for RLBFGS with and without mappings, respectively.  

\noindent
[1] A. Tveit, ``On the complexity of matrix inversion," Norwegian University of Science and Technology, Trondheim, Technical Report, 2003.

\hfill\break
\noindent
\textbf{-- Tight bound analysis: }
As the upper bound and lower bound complexities of the every algorithm are equal, we can conclude that the complexity bound is tight. Therefore, RLBFGS has time complexity $\Theta(m n^3)$ and $\Theta(m n^2)$, with and without our mappings, respectively. 
This shows that our proposed mappings improve the time of optimization. This time improvement shows off better if computation of gradients in Eq. (6) is not dominant in complexity. 

\section{Simulations for Larger Sample Size and Dimensionality}

Tables 2 and 3, in this Supplementary Material, report the average simulation results for large sample size, i.e., $N=100n^2$. 
In these tables, exponential map and Taylor approximation for retraction are used, respectively. 
We also have reported results for large dimensionality $d=100$ in these two tables.

\begin{table*}[!t]
\caption{Comparison of average results over ten runs where retraction with Taylor series expansion is used in algorithms and $K\in \{2,5\}$,  $n\in \{2,10\}$, $N=10n^2 \in \{40, 1000\}$. The $\#$iters, conv, iter, diff, and std are short for number of iterations, convergence, iteration, difference, and standard deviation, respectively.}
\label{table_RiemMax_10n2_K2_taylor}
\renewcommand{\arraystretch}{1.3}  
\centering
\scalebox{0.7}{    
\begin{tabular}{l | l | l | l | l | l | l | l | l}
\hline
\hline
$K$ & $n$ & Separation & Algorithm & \#iters & conv. time & time diff. std & iter. time & last cost \\
\hline
\multirow{18}{*}{2} & \multirow{9}{*}{2} & \multirow{ 3}{*}{Low} 	
& 		  VTF (ISR) 	&	 59.800$\pm$22.553 &	 93.135$\pm$88.437 & 19.280 	& 1.320$\pm$0.711 & 0.610$\pm$0.366 \\
& & 	& VTF (Chol.) 	&	 62.600$\pm$29.079 &	 106.545$\pm$117.633 	& 48.611 & 1.355$\pm$0.830 & 0.619$\pm$0.382 \\
& & 	& RLBFGS 	&	 	 59.800$\pm$26.803 &	 100.424$\pm$120.310 	& -- & 1.363$\pm$0.787 & 0.610$\pm$0.366 \\
\cline{3-9}                               
& & \multirow{ 3}{*}{Mid} 	
& 		  VTF (ISR) 	&	 45.800$\pm$16.982 &	 48.155$\pm$38.045 & 10.833 &	 0.900$\pm$0.454 & 0.482$\pm$0.497 \\
& & 	& VTF (Chol.) 	&	 47.000$\pm$18.074 &	 51.529$\pm$41.465 & 11.146 &	 0.930$\pm$0.483 & 0.482$\pm$0.497 \\
& & 	& RLBFGS 	&	 	 45.000$\pm$16.330 &	 48.150$\pm$37.285 & -- &	 0.921$\pm$0.458 & 0.482$\pm$0.497 \\
\cline{3-9}                            
& & \multirow{ 3}{*}{High} 	
& 		  VTF (ISR) 	&	 25.600$\pm$4.142 &	 9.816$\pm$3.837 	& 1.380 &    0.370$\pm$0.093 &  0.270$\pm$0.456   \\
& & 	& VTF (Chol.) 	&	 26.300$\pm$4.448 &	 10.540$\pm$4.190 & 1.932 &	 0.385$\pm$0.101 & 0.270$\pm$0.456   \\
& & 	& RLBFGS 	&	 	 25.500$\pm$4.353 &	 10.526$\pm$4.819 & -- &	 0.396$\pm$0.113 & 0.270$\pm$0.456   \\
\cline{2-9}   
& \multirow{9}{*}{10} & \multirow{ 3}{*}{Low} 	
& 		  VTF (ISR) 	&	 75.800$\pm$25.607 &	 166.736$\pm$137.600 & 63.108 &	 1.955$\pm$0.816 & 4.129$\pm$0.771\\
& & 	& VTF (Chol.) 	&	 74.500$\pm$22.629 &	 152.074$\pm$107.531 & 98.424 &	 1.855$\pm$0.682 & 4.129$\pm$0.771\\
& & 	& RLBFGS 	&	 	 74.800$\pm$25.442 &	 172.641$\pm$143.198 & -- & 	 2.054$\pm$0.834 & 4.129$\pm$0.771\\
\cline{3-9}  
& & \multirow{ 3}{*}{Mid}	
& 		  VTF (ISR) 	&	 50.900$\pm$13.956 &	 64.501$\pm$38.055 & 11.794 &	 1.170$\pm$0.395 &  3.472$\pm$1.154  \\
& & 	& VTF (Chol.) 	&	 52.100$\pm$13.892 &	 66.775$\pm$41.257 & 7.815 &	 1.184$\pm$0.410 &  3.472$\pm$1.154  \\
& & 	& RLBFGS 	&	 	 51.000$\pm$14.063 &	 70.132$\pm$46.421 &	-- & 1.264$\pm$0.450 &  3.472$\pm$1.154  \\
\cline{3-9}                                    
& & \multirow{ 3}{*}{High} 	
& 		  VTF (ISR) 	&	 43.600$\pm$5.522 &	 42.763$\pm$11.897 &	12.828 & 0.963$\pm$0.168  & 4.353$\pm$1.081   \\
& & 	& VTF (Chol.) 	&	 47.400$\pm$6.620 &	 50.438$\pm$15.187 & 8.731 &	 1.041$\pm$0.182  & 4.353$\pm$1.081   \\
& & 	& RLBFGS 	&	 	 44.300$\pm$6.464 &	 47.830$\pm$14.747 & -- &	 1.055$\pm$0.192  & 4.353$\pm$1.081   \\
\hline
\hline
\multirow{18}{*}{5} & \multirow{9}{*}{2} & \multirow{ 3}{*}{Low} 	
& 		  VTF (ISR) &	 262.500$\pm$126.532 &	 4430.577$\pm$4455.877 &	13196.851 & 13.849$\pm$6.991 & 0.082$\pm$0.281\\
& & 	& VTF (Chol.) &	 253.500$\pm$124.970 &	 4086.076$\pm$3967.586 &	10698.273 & 13.084$\pm$6.850 & 0.099$\pm$0.279\\
& & 	& RLBFGS &	 	 270.700$\pm$144.145 &	 5305.821$\pm$5495.383 &	-- & 15.560$\pm$8.452 & 0.090$\pm$0.282\\
\cline{3-9}   
& & \multirow{ 3}{*}{Mid} 	
& 		  VTF (ISR) &	 110.900$\pm$50.886 &	 731.318$\pm$769.639 	& 184.154 & 5.445$\pm$2.806    & 1.010$\pm$0.349\\
& & 	& VTF (Chol.) &	 112.200$\pm$60.736 &	 792.756$\pm$1053.832 & 416.142 &	 5.436$\pm$3.358 &  1.010$\pm$0.349\\
& & 	& RLBFGS &	 	 111.900$\pm$55.183 &	 814.994$\pm$975.737 	& -- & 5.827$\pm$3.289    & 1.010$\pm$0.349\\
\cline{3-9}   
& & \multirow{ 3}{*}{High} 	
& 		  VTF (ISR) &	 52.000$\pm$13.565 	& 116.485$\pm$69.535 & 12.185 &	 2.071$\pm$0.728 & 1.626$\pm$0.431  \\
& & 	& VTF (Chol.) &	 51.400$\pm$12.616 	& 114.032$\pm$67.916 & 19.838 &	 2.057$\pm$0.735 & 1.626$\pm$0.431  \\
& & 	& RLBFGS &	 	 53.400$\pm$14.081 	& 139.989$\pm$89.213 & -- &	 2.408$\pm$0.936 & 1.626$\pm$0.431  \\
\cline{2-9}
& \multirow{9}{*}{10} & \multirow{ 3}{*}{Low} 	
& 		  VTF (ISR) 	& 219.700$\pm$57.908 	& 2946.040$\pm$1513.372 	& 1239.375 & 12.579$\pm$3.507 & 6.643$\pm$0.662\\
& & 	& VTF (Chol.) 	& 226.100$\pm$86.010 	& 3272.111$\pm$2808.634 	& 2103.162 & 12.707$\pm$5.156 & 6.625$\pm$0.650\\
& & 	& RLBFGS 	& 	  231.300$\pm$64.484 	& 3607.215$\pm$1947.580 	& -- & 14.516$\pm$4.305 & 6.642$\pm$0.663\\
\cline{3-9}   
& & \multirow{ 3}{*}{Mid} 	
& 		  VTF (ISR) 	& 87.100$\pm$21.502 &	 413.616$\pm$223.510 & 54.437 &	 4.458$\pm$1.310 & 6.585$\pm$0.569\\
& & 	& VTF (Chol.) 	& 87.200$\pm$21.186 &	 405.484$\pm$214.985 & 47.791 &	 4.374$\pm$1.262 & 6.585$\pm$0.569\\
& & 	& RLBFGS 	& 	  87.400$\pm$20.403 &	 454.434$\pm$227.556 & -- &	 4.918$\pm$1.339 & 6.585$\pm$0.569\\
\cline{3-9}   
& & \multirow{ 3}{*}{High}  	
& 		  VTF (ISR) 	& 60.500$\pm$10.533 &	 181.113$\pm$76.070 & 16.691 &	 2.892$\pm$0.649 & 6.827$\pm$0.836\\
& & 	& VTF (Chol.) 	& 62.700$\pm$11.295 &	 197.565$\pm$93.653 & 24.531 &	 3.019$\pm$0.827 & 6.827$\pm$0.836\\
& & 	& RLBFGS 	& 	  58.900$\pm$11.040 &	 187.518$\pm$86.254 & -- &	 3.058$\pm$0.746 & 6.827$\pm$0.836\\
\hline
\hline
\end{tabular}%
}
\end{table*}

\begin{table*}[!t]
\caption{Comparison of average results over ten runs where exponential map is used in algorithms and $n\in \{2,10,100\}$, $N=100n^2 \in \{400, 10000, 1000000\}$, $K=2$.}
\label{table_RiemMax_100n2_K2_expm}
\renewcommand{\arraystretch}{1.3}  
\centering
\scalebox{0.7}{    
\begin{tabular}{l | l | l | l | l | l | l | l}
\hline
\hline
$n$ & Separation & Algorithm & \#iters & conv. time & time diff. std & iter. time & last cost \\
\hline
\multirow{9}{*}{2} & \multirow{ 3}{*}{Low} 
&	 VTF (ISR) &	 72.000$\pm$22.949 &	 127.902$\pm$98.372 	& 31.248 & 1.616$\pm$0.584 & 0.281$\pm$0.394\\
& &	 VTF (Chol.) &	 69.800$\pm$23.136 &	 116.250$\pm$86.811 	& 23.937 & 1.490$\pm$0.590 & 0.281$\pm$0.394\\
& &	 RLBFGS &	 	 68.900$\pm$24.875 &	 123.064$\pm$105.594 	& -- & 1.561$\pm$0.694 & 0.281$\pm$0.394    \\
\cline{2-8}                                                                         
& \multirow{ 3}{*}{Mid} 
&	 VTF (ISR) &	 59.400$\pm$16.460 &	 78.124$\pm$53.880 & 10.139 &	 1.210$\pm$0.421 & 0.520$\pm$0.392\\
& &	 VTF (Chol.) &	 54.900$\pm$12.862 &	 63.241$\pm$34.382 & 31.027 &	 1.082$\pm$0.332 & 0.516$\pm$0.393\\
& &	 RLBFGS &	 	 58.400$\pm$17.037 &	 80.042$\pm$62.183 & -- &	 1.240$\pm$0.497 & 0.520$\pm$0.392    \\
\cline{2-8}                                                                         
& \multirow{ 3}{*}{High} 
& 	     VTF (ISR) &	 23.300$\pm$2.627 &	 7.434$\pm$2.288 & 0.738 &	 0.313$\pm$0.057 & 0.102$\pm$0.288\\
& & 	 VTF (Chol.) &	 22.900$\pm$2.685 &	 7.273$\pm$2.084 & 0.873 &	 0.312$\pm$0.052 & 0.102$\pm$0.288\\
& & 	 RLBFGS &	 	 23.300$\pm$2.497 &	 7.917$\pm$2.135 & -- &	 0.335$\pm$0.053 & 0.102$\pm$0.288    \\
\hline
\hline
\multirow{9}{*}{10} & \multirow{ 3}{*}{Low} 
&	 VTF (ISR) 	&  63.400$\pm$16.728 	& 112.682$\pm$74.903 &	17.031 & 1.664$\pm$0.489 & 4.364$\pm$1.299\\
& &	 VTF (Chol.)&  65.000$\pm$17.994 	& 117.215$\pm$82.258 &	13.637 & 1.670$\pm$0.538 & 4.364$\pm$1.299\\
& &	 RLBFGS 	&  64.900$\pm$17.866 	& 126.987$\pm$87.668 &	-- & 1.819$\pm$0.561 & 4.364$\pm$1.299\\
\cline{2-8}  
& \multirow{ 3}{*}{Mid} 
&	 VTF (ISR) 	&  48.200$\pm$8.766 	& 58.096$\pm$23.089 & 10.899 &	 1.164$\pm$0.256 & 4.024$\pm$1.627\\
& &	 VTF (Chol.)&  49.800$\pm$11.811 	& 64.021$\pm$34.770 &	6.571 & 1.208$\pm$0.364 & 4.024$\pm$1.627\\
& &	 RLBFGS 	&  48.400$\pm$10.906 	& 64.099$\pm$32.746 &	-- & 1.251$\pm$0.361 & 4.024$\pm$1.627\\
\cline{2-8}  
& \multirow{ 3}{*}{High} 
& 	     VTF (ISR) 	&  45.100$\pm$7.385 &	 49.628$\pm$20.550 &	7.336 & 1.065$\pm$0.243 & 3.290$\pm$1.129\\
& & 	 VTF (Chol.)&  47.800$\pm$8.121 &	 55.217$\pm$22.622 & 6.572 &	 1.117$\pm$0.251 & 3.290$\pm$1.129\\
& & 	 RLBFGS 	&  45.200$\pm$7.131 &	 52.883$\pm$20.627 & -- &	 1.137$\pm$0.236 & 3.290$\pm$1.129\\
\hline
\hline
\multirow{9}{*}{100} & \multirow{ 3}{*}{Low} 
& 	     VTF (ISR) 	 &	 131.900$\pm$32.402 &	 54826.327$\pm$30454.295 & 5842.849 &	 389.761$\pm$121.060 & 63.703$\pm$2.843\\
& & 	 VTF (Chol.) &	 141.200$\pm$36.908 &	 59082.278$\pm$32385.087 & 3775.919 &	 392.658$\pm$110.916 & 63.703$\pm$2.843\\
& & 	 RLBFGS 	 &	 136.300$\pm$36.059 &	 56627.788$\pm$32602.224 & -- &	 387.367$\pm$119.404 & 63.703$\pm$2.843\\
\cline{2-8}                                    
& \multirow{ 3}{*}{Mid} 
& 	     VTF (ISR) 	 &	 87.800$\pm$20.225 	& 23635.623$\pm$11743.237 &	5738.406 & 259.026$\pm$52.061 & 61.263$\pm$4.734\\
& & 	 VTF (Chol.) &	 95.800$\pm$25.170 	& 28033.655$\pm$16196.321 &	1410.113 & 278.456$\pm$64.670 & 61.263$\pm$4.734\\
& & 	 RLBFGS 	 &	 94.900$\pm$25.653 	& 28527.524$\pm$17341.755 &	-- & 284.548$\pm$69.821 & 61.263$\pm$4.734\\
\cline{2-8}                                    
& \multirow{ 3}{*}{High} 
& 	     VTF (ISR) 	 &	 97.000$\pm$24.240 	& 29048.836$\pm$12857.812 &	3441.882 & 286.602$\pm$63.250 & 61.241$\pm$3.991\\
& & 	 VTF (Chol.) &	 105.100$\pm$27.477 	& 33967.543$\pm$15707.841 & 2014.574 &	 308.668$\pm$70.065 & 61.241$\pm$3.991\\
& & 	 RLBFGS 	 &	 103.200$\pm$26.389 	& 33101.986$\pm$15790.066 &	-- & 305.002$\pm$74.361 & 61.241$\pm$3.991\\
\hline
\hline
\end{tabular}%
}
\end{table*}

\begin{table*}[!h]
\caption{Comparison of average results over ten runs where retraction with Taylor series expansion is used in algorithms and $n\in \{2,10,100\}$, $N=100n^2 \in \{400, 10000, 1000000\}$, $K=2$.}
\label{table_RiemMax_100n2_K2_taylor}
\renewcommand{\arraystretch}{1.3}  
\centering
\scalebox{0.7}{    
\begin{tabular}{l | l | l | l | l | l | l | l}
\hline
\hline
$n$ & Separation & Algorithm & \#iters & conv. time & time diff. std & iter. time & last cost \\
\hline
\multirow{9}{*}{2} & \multirow{ 3}{*}{Low} 
&	 VTF (ISR)  &	 79.800$\pm$49.973 &	 206.935$\pm$343.153 &	105.106 & 1.839$\pm$1.340 & 0.403$\pm$0.578   \\
& &	 VTF (Chol.)&  	 72.200$\pm$20.004 &	 125.636$\pm$83.113  & 334.239 &    1.608$\pm$0.536 & 0.402$\pm$0.576   \\
& &	 RLBFGS  	& 	 83.900$\pm$51.054 &	 237.936$\pm$364.382 &	-- & 2.064$\pm$1.412 & 0.404$\pm$0.578   \\
\cline{2-8}                                    
&\multirow{ 3}{*}{Mid} 
&	 VTF (ISR)  &	 48.100$\pm$12.688 &	 49.277$\pm$30.486 &	12.185 & 0.946$\pm$0.334 & 0.196$\pm$0.436   \\
& &	 VTF (Chol.)&  	 50.100$\pm$14.271 &	 56.329$\pm$40.420 &	14.674 & 1.017$\pm$0.422 & 0.196$\pm$0.436   \\
& &	 RLBFGS  	& 	 51.000$\pm$13.622 &	 61.036$\pm$39.550 &	-- & 1.099$\pm$0.412 & 0.196$\pm$0.436   \\
\cline{2-8}                                  
&\multirow{ 3}{*}{High} 
& 	 VTF (ISR)  &	     25.300$\pm$3.945 &	 9.887$\pm$3.903 & 3.565  	 &   0.377$\pm$0.101 & 0.111$\pm$0.261  \\
& & 	 VTF (Chol.)&  	 26.500$\pm$4.927 &	 11.003$\pm$5.644  & 4.938 &	 0.395$\pm$0.124 & 0.111$\pm$0.261      \\
& & 	 RLBFGS  	& 	 25.100$\pm$4.012 &	 10.203$\pm$4.224  & -- &	 0.392$\pm$0.106 & 0.111$\pm$0.261      \\
\hline
\hline
\multirow{9}{*}{10} & \multirow{ 3}{*}{Low} 
&	 VTF (ISR) &	 65.500$\pm$18.435 &	 123.552$\pm$77.898 &	14.558 & 1.747$\pm$0.560 & 4.164$\pm$1.142\\
& &	 VTF (Chol.) &	 66.500$\pm$19.558 &	 122.281$\pm$79.423 &	19.994 & 1.688$\pm$0.569 & 4.164$\pm$1.142\\
& &	 RLBFGS &	 	 65.200$\pm$18.743 &	 130.098$\pm$84.241 &	-- & 1.835$\pm$0.622 & 4.164$\pm$1.142\\
\cline{2-8}                                                                
& \multirow{ 3}{*}{Mid} 
&	 VTF (ISR) &	 40.600$\pm$5.211 	& 38.915$\pm$12.591 &	8.862 & 0.938$\pm$0.177 & 4.589$\pm$1.278\\
& &	 VTF (Chol.) &	 41.000$\pm$5.538 	& 38.732$\pm$12.699 &	9.319 & 0.924$\pm$0.171 & 4.589$\pm$1.278\\
& &	 RLBFGS &	 	 40.400$\pm$6.186 	& 41.513$\pm$15.303 &	-- & 0.998$\pm$0.213 & 4.589$\pm$1.278\\
\cline{2-8}                                                                
& \multirow{ 3}{*}{High} 
& 	     VTF (ISR) &	 44.400$\pm$6.310 &	 46.957$\pm$15.325 & 11.008 &	 1.032$\pm$0.198 & 3.786$\pm$1.113\\
& & 	 VTF (Chol.) &	 46.000$\pm$6.616 &	 50.393$\pm$15.950 & 12.688 &	 1.070$\pm$0.195 & 3.786$\pm$1.113\\
& & 	 RLBFGS &	 	 44.200$\pm$7.099 &	 50.707$\pm$18.070 & -- &	 1.116$\pm$0.222 & 3.786$\pm$1.113\\
\hline
\hline
\multirow{9}{*}{100} & \multirow{ 3}{*}{Low} 	
&     VTF (ISR) 	& 118.500$\pm$11.404 &	 39986.299$\pm$6719.992 & 2227.745 &	 335.341$\pm$26.657  &  62.241$\pm$5.708\\
& 	& VTF (Chol.) 	& 124.400$\pm$11.157 &	 42882.658$\pm$7219.289 & 3669.708 &	 342.538$\pm$28.830  &  62.241$\pm$5.708\\
& 	& RLBFGS 	 	& 121.300$\pm$11.235 &	 41171.059$\pm$7987.985 & -- &	 336.547$\pm$36.783  &  62.241$\pm$5.708\\
\cline{2-8}  
&\multirow{ 3}{*}{Mid} 	
&     VTF (ISR) 	& 103.800$\pm$24.679 &	 32533.037$\pm$16124.089 & 3314.944 &	 297.268$\pm$76.267 & 60.256$\pm$3.662\\
& 	& VTF (Chol.) 	& 111.900$\pm$28.781 &	 37328.925$\pm$18924.066 & 2916.610 &	 314.512$\pm$82.724 & 60.256$\pm$3.662\\
& 	& RLBFGS 	 	& 111.300$\pm$27.941 &	 36926.995$\pm$17705.262 & -- &	 314.564$\pm$76.642 & 60.256$\pm$3.662\\
\cline{2-8}  
&\multirow{ 3}{*}{High} 	
&     VTF (ISR) 	& 96.300$\pm$25.880 	& 28818.656$\pm$14923.845 & 1492.788 &	 283.824$\pm$67.267 & 60.601$\pm$4.436\\ 
& 	& VTF (Chol.) 	& 103.900$\pm$26.126 	& 33669.987$\pm$16494.757 & 3368.579 &	 308.353$\pm$73.281 & 60.601$\pm$4.436\\
& 	& RLBFGS 	 	& 102.800$\pm$25.258 	& 31630.478$\pm$15755.934 & -- &	 292.771$\pm$68.396 & 60.601$\pm$4.436\\
\hline
\hline
\end{tabular}%
}
\end{table*}

\section{Cost Difference Progress for $N=10n^2$ Sample Size Simulations}

The log-scale cost difference for simulations of $N=10n^2$ are depicted in Figs. 1 and 2 of this Supplementary Material, for $K=2$ and $K=5$, respectively. 

\begin{figure}[!h]
\centering
\includegraphics[width=\textwidth]{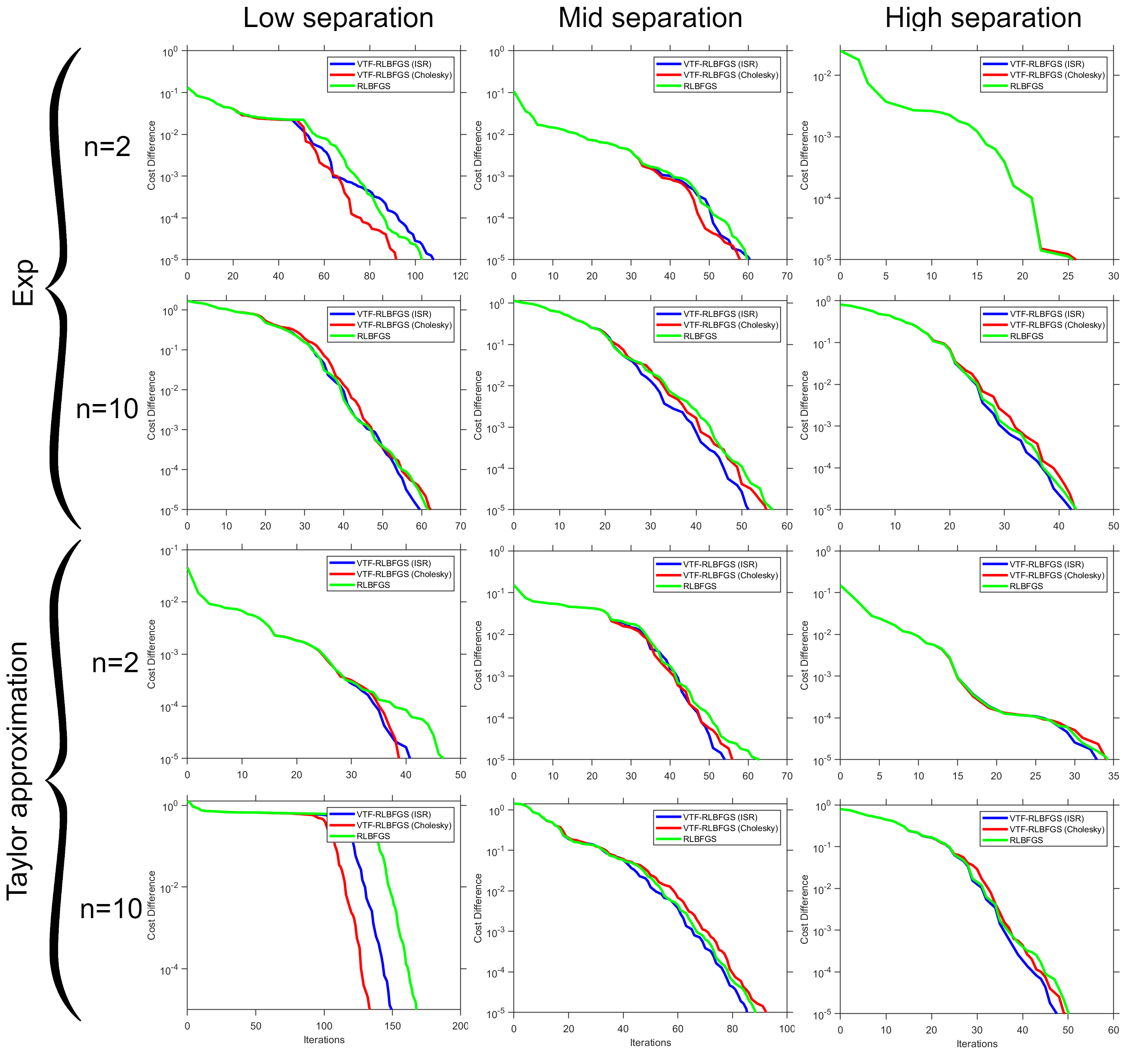}
\caption{Comparison of the proposed VTF-RLBFGS algorithm (using ISR and Cholesky) with RLBFGS in their cost differences. In these experiments, we had $K=2$.}
\label{figure_cost_10n2_K2_exp_taylor}
\end{figure}

\begin{figure}[!h]
\centering
\includegraphics[width=\textwidth]{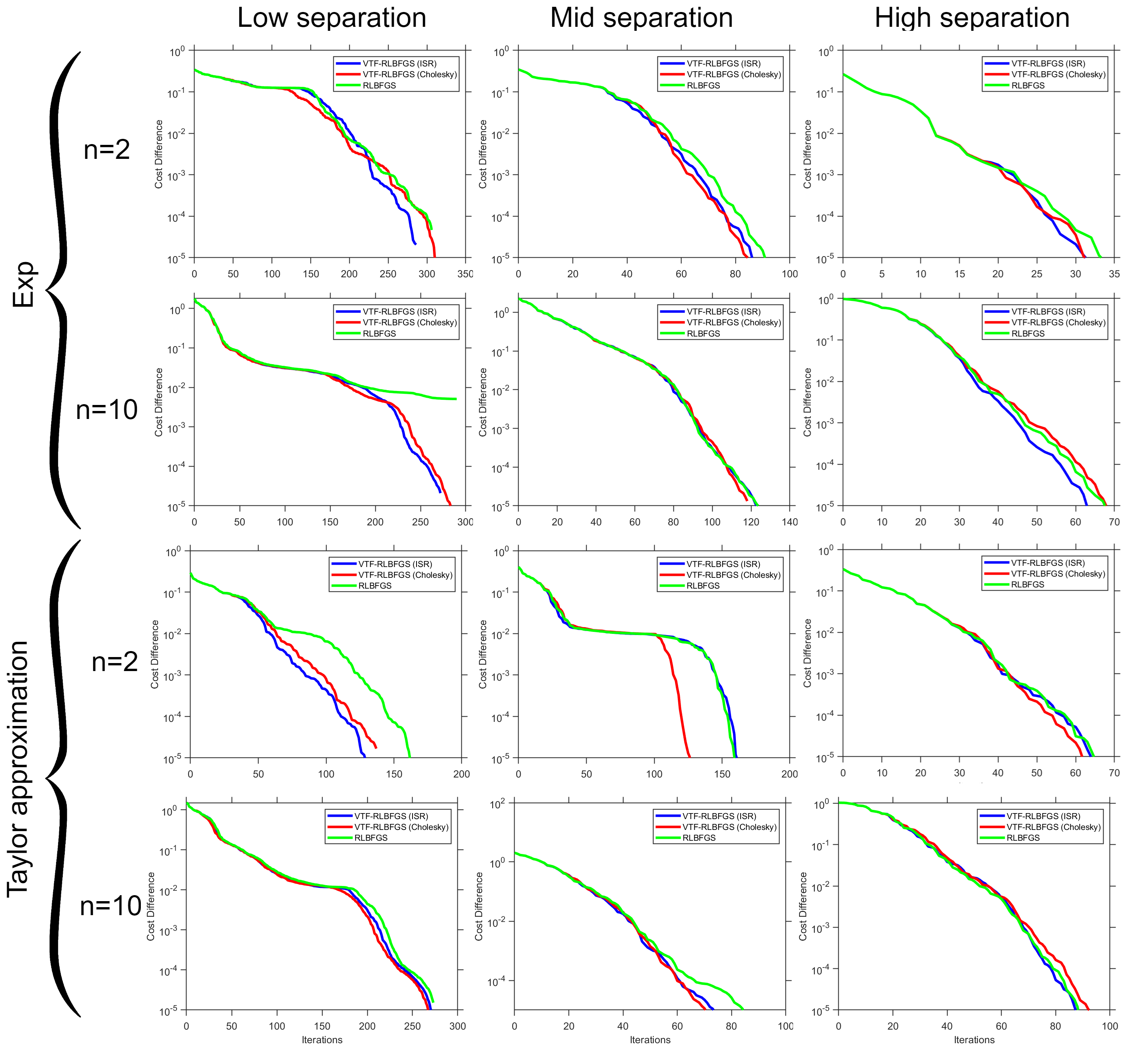}
\caption{Comparison of the proposed VTF-RLBFGS algorithm (using ISR and Cholesky) with RLBFGS in their cost differences. In these experiments, we had $K=5$.}
\label{figure_cost_10n2_K5_exp_taylor}
\end{figure}

\section{Cost Difference Progress for $N=100n^2$ Sample Size Simulations}

The log-scale cost difference for simulations of $N=100n^2$ are depicted in Figs. 3 and 4 of this Supplementary Material, where exponential map and Taylor approximation for retraction are used, respectively. 

\begin{figure}[!h]
\centering
\includegraphics[width=\textwidth]{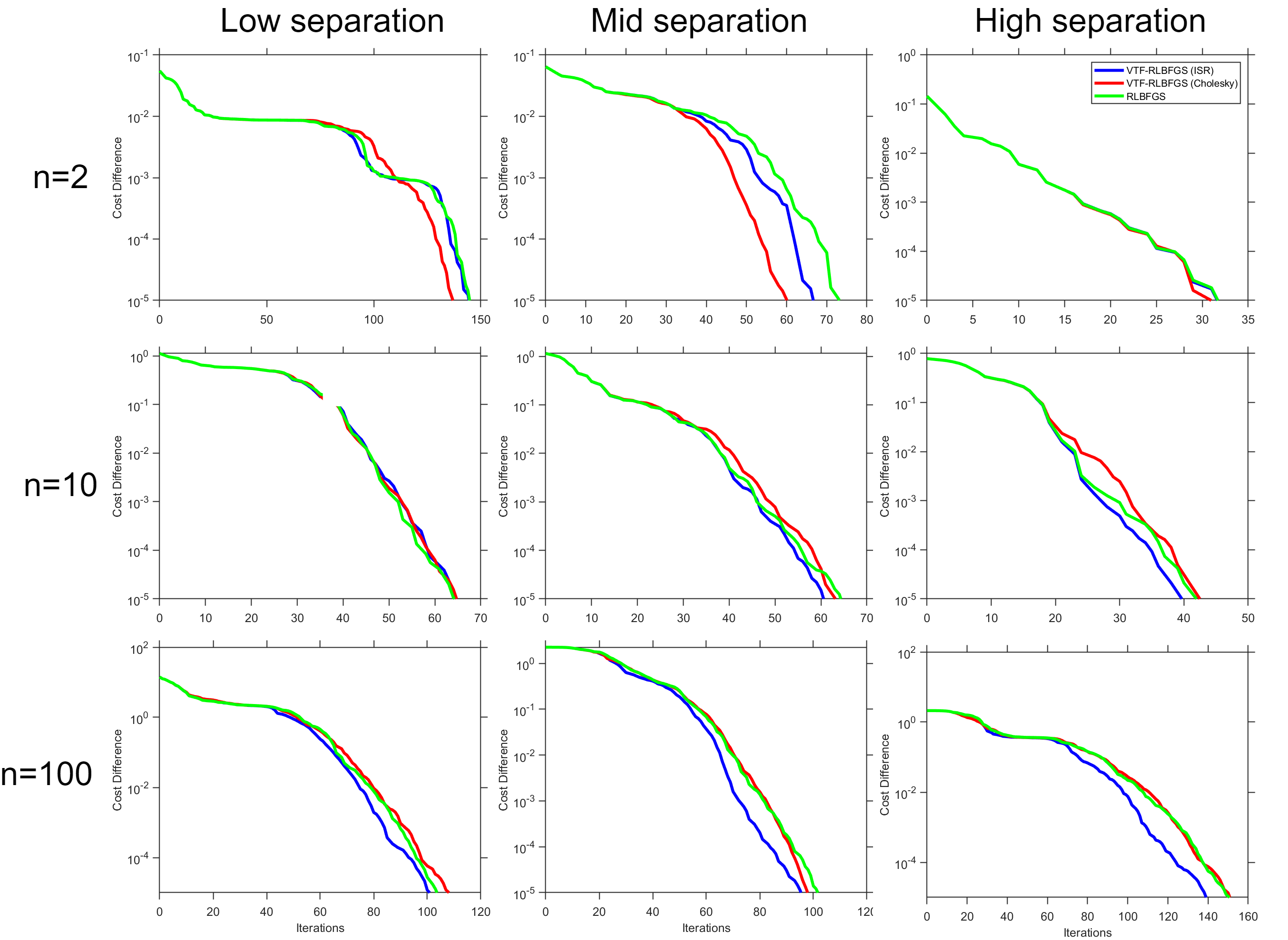}
\caption{Comparison of the proposed VTF-RLBFGS algorithm (using ISR and Cholesky) with RLBFGS in their cost differences. In these experiments, exponential map is used in optimization procedure and we had $K=2$.}
\label{figure_cost_100n2_K2_exp}
\end{figure}

\begin{figure}[!h]
\centering
\includegraphics[width=\textwidth]{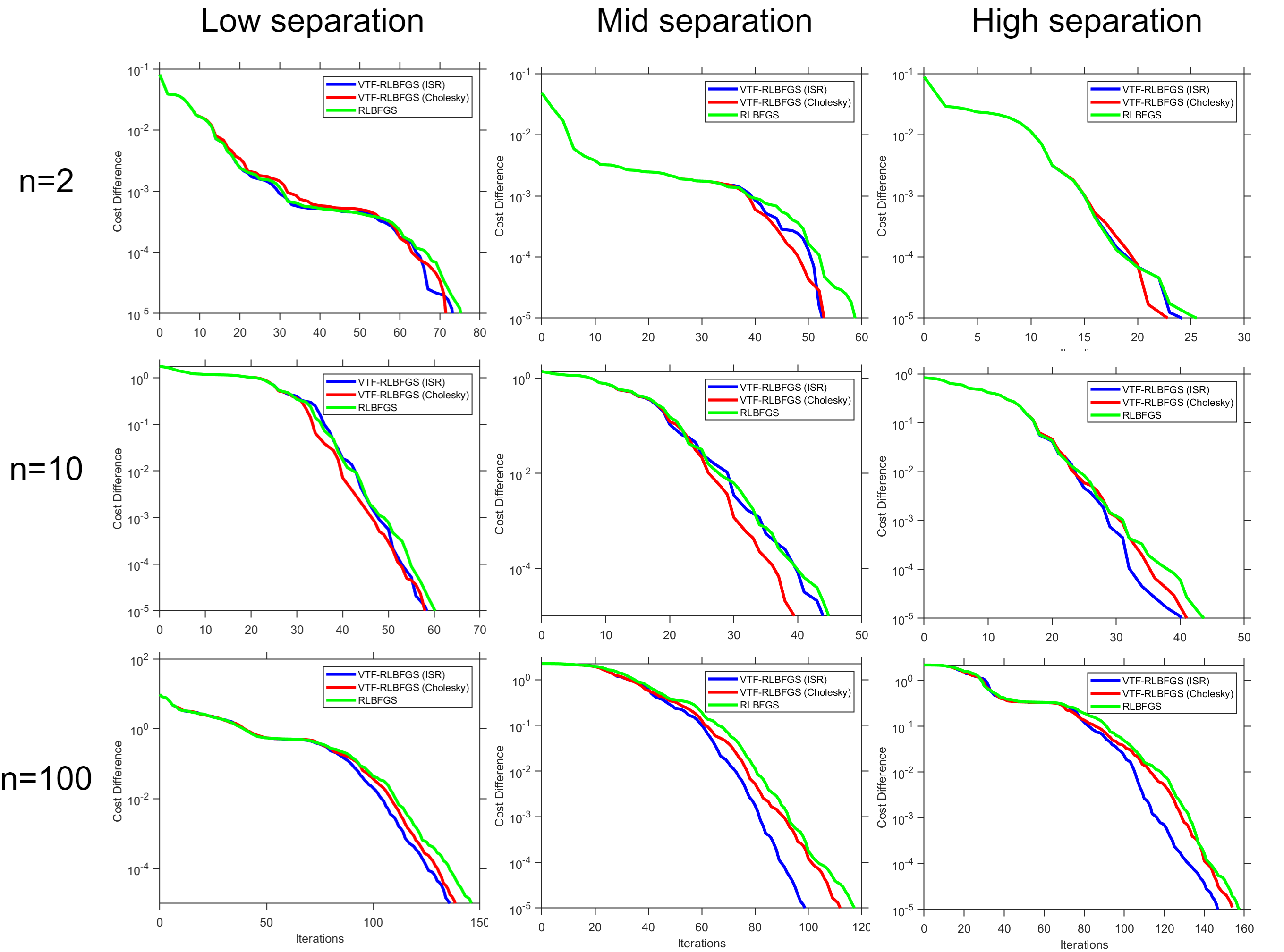}
\caption{Comparison of the proposed VTF-RLBFGS algorithm (using ISR and Cholesky) with RLBFGS in their cost differences. In these experiments, Taylor series expansion is used for approximation of retraction operator and we had $K=2$.}
\label{figure_cost_100n2_K2_taylor}
\end{figure}

\end{document}